\title{Differentiable-by-design Nonlinear Optimization \\ for Model Predictive Control}
\author{Riccardo Zuliani, Efe C. Balta, and John Lygeros
\thanks{Corresponding author: R. Zuliani. This work was supported as a part of NCCR Automation, a National Centre of Competence in Research, funded by the Swiss National Science Foundation (grant number 51NF40\_225155). All authors are with the Automatic Control Laboratory (IfA), ETH Z\"urich, 8092 Z\"urich, Switzerland \texttt{\small$\{$rzuliani,lygeros$\}$@ethz.ch}. E. C. Balta is also with inspire AG, 8005 Z\"urich, Switzerland. \texttt{\small efe.balta@inspire.ch} }}
\begin{document}
%
\maketitle
\begin{abstract}
Nonlinear optimization-based control policies, such as those those arising in nonlinear Model Predictive Control, have seen remarkable success in recent years. These policies require solving computationally demanding nonlinear optimization programs online at each time-step. The resulting solution map, viewed as a function of the measured state of the system and design parameters, may not be differentiable, which poses significant challenges if the control policy is embedded in a gradient-based policy optimization scheme. 
We propose a principled way to regularize the nonlinear optimization problem, obtaining a surrogate derivative even if when the original problem is not differentiable. 
The surrogate problem is differentiable by design and its solution map coincides with the solution of the unregularized problem. 
We demonstrate the effectiveness of our approach in a free-final-time optimal control problem and a receding-horizon nonlinear MPC example.
\end{abstract}
\section{Introduction}
\IEEEPARstart{T}{hanks} to the rapid advancement in real-time computational capabilities, optimization-based control techniques are now widely adopted across both industry and academia. These approaches compute the control inputs by solving an optimization problem, making it possible to directly include cost specifications and constraints within the control design.
The most prominent example of this is Model Predictive Control (MPC), where the objective is to optimize the predicted future state and input trajectory of a system to minimize a predefined cost under input and process constraints \cite{rawlings2017model}. To predict the system's future evolution accurately, a model of the system dynamics is required. For systems with nonlinear dynamics, this leads to a parametric nonlinear programming problem (NLP), as the optimization typically depends on the system state and design parameters (e.g., the terminal cost). State-of-the-art solvers, such as the open-source IPOPT \cite{wachter2006implementation}, can solve these NLPs efficiently; however, their solutions may exhibit nonsmooth or even discontinuous behavior as a function of the parameters.

One approach to enhance the performance of optimization-based controllers is policy optimization \cite{zuliani2023bp,agrawal2020learning}, which involves a gradient-based tuning process aimed at identifying the optimal controller for a given task. This is achieved by defining a cost function, parameterizing the controller, and calculating the gradient of the cost with respect to the parameters of the controller.

In recent years, policy optimization has been successfully applied to MPC controllers, yielding impressive results \cite{zuliani2023bp,agrawal2020learning,zuliani2024closed,gros2019data}. However, the general lack of differentiability in nonlinear programs presents a significant challenge for policy optimization when using nonlinear MPC controllers, as policy optimization requires both the controller and the system dynamics to be sufficiently smooth.
In this paper, we propose a principled method to regularize an NLP to ensure that its solution map is differentiable by design, while maintaining close proximity to the unregularized solution. In contrast to existing approaches \cite{gros2019data,pirnay2012optimal,andersson2018sensitivity}, which focus on differentiating NLPs under restrictive assumptions (e.g., SSOSC, LICQ, SCS, see \cref{section:diff}), our method applies to any NLP with twice continuously differentiable objectives and constraints, without imposing such conditions on the problem's minimizer. Specifically, our contributions are as follows:
\begin{enumerate}
    \item we propose a regularization for a generic NLP to guarantee that the regularized problem has a differentiable solution map which is locally equivalent to the solution of the original problem;
    \item we show that, if the solution of the original problem is differentiable, then by reducing the regularization factor to $0$, the derivatives of the two problems converge to each other at a linear rate;
    \item we propose a simple algorithm to obtain the derivative of the regularized problem.
\end{enumerate}
\rzz{A key feature of our approach is that it provides derivatives consistent with the local minimizer to which the solver converges when warm-started. As a result, the sensitivities are not arbitrary but correspond to the solution that is most relevant in practice as the parameter varies.}

\subsubsection*{Related work}
The solution map of a nonlinear program is differentiable under the strong second-order sufficient conditions of optimality (SSOSC), the linear independence constraint qualification (LICQ), and the strict complementarity slackness (SCS). The derivative can be computed by applying the implicit function theorem to the KKT conditions \cite{jittorntrum1978sequential,pirnay2012optimal}. This technique has been used extensively to solve policy optimization problems with nonlinear optimization-based policies \cite{drgovna2024learning,oshin2023differentiable}, and convex optimization-based policies \cite{zuliani2023bp,zuliani2024closed,amos2018differentiable,agrawal2020learning,tao2024difftune}. %
In the context of strongly convex QPs, \cite{amos2017optnet} proposes a neural network architecture that allows forward and backward differentiation through optimization layers. %
Numerical methods to obtain derivatives of NLPs have been proposed in \cite{pirnay2012optimal,andersson2018sensitivity} and more recently in \cite{frey2025differentiable}, in all cases under the SSOSC, LICQ, and SCS. In \cite{gros2019data}, the authors solve a policy optimization problem by performing a parameter update at each time-step (using the policy gradient update scheme). To ensure that the SSOSC hold by design, they perform a projection on the parameter space after each update step by solving an LMI, which could be numerically expensive.

\subsubsection*{Notation}
We use $\operatorname*{col}(a,b)$ to denote the vector or matrix obtained by vertically stacking $a$ and $b$. We use $\operatorname*{diag}(a,b)$ to denote the matrix whose diagonal entries are the elements of $a$ and $b$.
For $a<b$, we use $\mathbb{Z}_{[a,b]}$ to denote the set of integers contained between $a$ and $b$.
$A \succ 0$ indicates that the symmetric matrix $A$ is positive definite.
$\nabla_{xy}^2f(x,y)$ denotes the second-order partial derivative of $f$ in $x$ and $y$.
Given any invertible matrix $A$, we use $\kappa(A)=\|A\|\|A^{-1}\|$ to denote its condition number.
%
\section{Problem statement}
Consider the parameterized nonlinear program (NLP)
\begin{align}
\begin{split}
\operatorname*{minimize}_x & \quad f_\theta(x)\\
\text{subject to} & \quad g_\theta(x) \leq 0\\
& \quad h_\theta(x) = 0,
\end{split} \tag{$P_1(\theta)$} \label{eq:P1}
\end{align}
where $\theta \subset \R^{n_\theta}$ is a parameter, $f_\theta:\R^{n_x}\to\R$, $g_\theta:\R^{n_x}\to\R^{n_\text{in}}$, $h_\theta:\R^{n_x}\to\R^{n_\text{eq}}$ are twice continuously differentiable for all $\theta$, and $\nabla_x f_\theta$, $\nabla_x g_\theta$, $\nabla_x h_\theta$ are continuously differentiable in $\theta$. The Lagrangian of \ref{eq:P1} is
\begin{align}
\bar{\mathcal{L}}_\theta(x,\lambda,\nu) = f_\theta(x) + \sum_{i=1}^{n_\text{in}} \lambda_i g_{\theta,i}(x) + \sum_{j=1}^{n_\text{eq}} \nu_j h_{\theta,j}(x), \label{eq:P1_lagrangian}
\end{align}
where $\lambda\in\R^{n_\text{in}}$ and $\nu\in\R^{n_\text{eq}}$ are the multipliers associated to the inequality and the equality constraints, respectively.
We denote with $X(\theta) \subset \R^{n_x}$ the set of local minimizers of \ref{eq:P1} for a given $\theta$ (where $X(\theta)=\emptyset$ if \ref{eq:P1} is infeasible).

Under appropriate constraint qualification conditions, if $x$ is a local minimizer of \ref{eq:P1}, then there must exist a pair of Lagrange multipliers $\lambda$, $\nu$ satisfying the KKT conditions
\begin{align}
\begin{split}
\nabla_x \bar{\mathcal{L}}_\theta(x,\lambda,\nu) & = 0\\
g_\theta(x) &\leq 0,\\
h_\theta(x) & = 0,\\
\lambda_i g_{\theta,i}(x) & = 0,~~ \forall i\in\Z_{[1,n_\text{in}]},\\
\lambda_i &\geq 0,~~ \forall i\in\Z_{[1,n_\text{in}]}.
\end{split}\label{eq:KKT}
\end{align}
We use $\Phi(\theta) \subset \R^{n_\phi}$, where $n_\phi=n_x+n_\text{in}+n_\text{eq}$, to denote the set of all primal-dual variables $\phi=(x,\lambda,\nu)\in\R^{n_\phi}$ of \ref{eq:P1} that verify \cref{eq:KKT} with $x\in X(\theta)$ for a given $\theta$, where $\Phi(\theta)=\emptyset$ if $X(\theta)=\emptyset$ or if no Lagrange multipliers exist. We will implicitly assume that $\Phi(\theta)$ is nonempty for all values of interest of $\theta$, and refer the reader to \cite{bertsekas1997nonlinear} for a thorough description of various constraint qualifications ensuring $\Phi(\theta) \neq \emptyset$.

We use $\mathcal{I}(x), \bar{\mathcal{I}}(x) \subset \Z_{[1,n_\text{in}]}$ to denote the set of indices $i$ for which $g_{\theta,i}(x)=0$, and $g_{\theta,i}(x) \neq 0$, respectively, and additionally $g_{\theta,\mathcal{I}} = \operatorname*{col}(\{ g_{\theta,i} \}_{i\in \mathcal{I}(x)})$, and similarly for $\bar{\mathcal{I}}(x)$.

Given a value of $\theta$, under appropriate conditions that we introduce in \cref{section:diff}, the solution set $X(\theta)$ shrinks locally to a singleton, $X(\theta)=\{ x(\theta) \}$. Our objective is to compute the Jacobian of the solution map $x(\theta)$ for a given $\theta$. Whenever this is not possible (for example if $X(\theta)$ is not a singleton), then our objective becomes formulating a surrogate optimization problem by introucing regularization terms in \ref{eq:P1} to ensure that 1) the solution map of the surrogate problem is differentiable and its derivative can be efficiently computed, 2) the solution of the surrogate problem is equal to the one of the original problem at $\theta$, and \rzz{3) the surrogate derivative can be used to construct a local first-order approximation of $x(\theta)$}. Importantly, the surrogate problem is introduced solely to compute a surrogate derivative and is not intended to be solved as a substitute for \ref{eq:P1}.
%
\section{Preliminaries on derivatives of NLPs}\label{section:diff}
Before presenting our main contributions, we review some fundamental concepts about differentiability of NLPs.

Problem \probone may have infinitely many solutions. The following condition guarantees that a vector $x$ is a strict local minimizer of \probone \cite[Proposition 3.3.2]{bertsekas1997nonlinear}.
\begin{definition}
A primal-dual pair $\phi$ satisfies the \emph{strong second order sufficient conditions of optimality} (SSOSC) if $\phi\in \Phi(\theta)$ and $y^{\top} \nabla_x^2 \bar{\mathcal{L}}_\theta(\phi) y >0$ for all $y\in \R^{n_x}$ satisfying $\nabla_x h_{\theta}(x)y=0$, and $\nabla_x g_{\theta,i}(x)y=0$ for all $i\in \mathcal{I}(x)$ with $\lambda_i>0$.
\end{definition}
Similar to $X(\theta)$, the set of primal-dual solutions $\Phi(\theta)$, if not empty, may contain infinitely many points even if $X(\theta)$ is a singleton. The following constraint qualification ensures uniqueness of the multiplier vector and, combined with SSOSC and SCS, guarantees that $\Phi(\theta)$ is single-valued in a neighborhood of $\theta$ \cite[Proposition 3.1.1]{bertsekas1997nonlinear}.
\begin{definition}
A minimizer $x\in X(\theta)$ of \probone satisfies the linear independence constraint qualification (LICQ) if the matrix $\operatorname*{col}(\nabla_x g_{\theta,\mathcal{I}}(x),\nabla_x h_{\theta}(x))$ is full row rank.
\end{definition}
To ensure differentiability, we require one last qualification.
\begin{definition}
A primal-dual pair $\phi$ satisfies the \emph{strict complementarity slackness} (SCS) condition if $\lambda_i>0$ for all $i\in \mathcal{I}(x)$.
\end{definition}
\begin{lemma}\label{lemma:differentiability}
If $\phi\in \Phi(\theta)$ satisfies the SSOSC, LICQ, and SCS, then there exists a neighborhood $N$ of $\theta$ and a continuously differentiable function $\phi:N \to \R^{n_\phi}$, such that $\phi(\tilde{\theta}) \in \Phi(\tilde{\theta})$ satisfies the SSOSC, LICQ, and SCS for all $\tilde{\theta}\in N$; moreover $\nabla_\theta x(\theta)$ is given by the first $n_x$ rows of the unique $V$ satisfying
\begin{align}
A_\theta(\phi)V=b_\theta(\phi), \label{eq:derivative}
\end{align}
with
\begin{align}
\begin{split}
A_\theta(\phi)&=\begin{bmatrix}
\nabla_x^2 \bar{\mathcal{L}}_\theta(\phi) & \nabla_x g_{\theta,\mathcal{I}}(x)^{\top} & \nabla_x h_\theta(x)^{\top} \\
\nabla_x g_{\theta,\mathcal{I}}(x) & 0 & 0 \\
\nabla_x h_\theta(x) & 0 & 0
\end{bmatrix},\\
b_\theta(\phi)&= \operatorname*{col}(%
\nabla^2_{\theta x} \bar{\mathcal{L}}_\theta(\phi),%
\nabla_\theta g_{\theta,\mathcal{I}}(x),%
\nabla_\theta h_\theta(x)).
\end{split}\label{eq:A_B_KKT_system}
\end{align}
\end{lemma}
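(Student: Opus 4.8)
\emph{Proof sketch.} The plan is to reduce the KKT system \eqref{eq:KKT} to a square system of smooth equations and apply the implicit function theorem. Write $\phi=(x,\lambda,\nu)$ and split the inequality multipliers as $\lambda=(\lambda_{\mathcal{I}},\lambda_{\bar{\mathcal{I}}})$ according to the active set $\mathcal{I}=\mathcal{I}(x)$ at the base point; since $\phi\in\Phi(\theta)$ and SCS holds, $\lambda_{\mathcal{I}}>0$, while complementarity forces $\lambda_{\bar{\mathcal{I}}}=0$ and $g_{\theta,\bar{\mathcal{I}}}(x)<0$. I would then introduce, for $\tilde\theta$ near $\theta$ and reduced variable $\psi=(x,\lambda_{\mathcal{I}},\nu)$,
\begin{equation*}
F(\psi,\tilde\theta)=\operatorname*{col}\bigl(\nabla_x\bar{\mathcal{L}}_{\tilde\theta}(x,\lambda_{\mathcal{I}},0,\nu),\ g_{\tilde\theta,\mathcal{I}}(x),\ h_{\tilde\theta}(x)\bigr),
\end{equation*}
which vanishes at $(\psi,\theta)$, is continuously differentiable in $(\psi,\tilde\theta)$ by the smoothness hypotheses on $f_\theta,g_\theta,h_\theta$, and, by a direct computation, satisfies $\nabla_\psi F(\psi,\tilde\theta)=A_{\tilde\theta}(\phi)$.

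The step I expect to be the main obstacle is showing that $A_\theta(\phi)$ is nonsingular; this is exactly where SSOSC and LICQ are used. I would take $V=\operatorname*{col}(v,w_g,w_h)$ in the kernel of $A_\theta(\phi)$: the last two block rows give $\nabla_x g_{\theta,\mathcal{I}}(x)v=0$ and $\nabla_x h_\theta(x)v=0$, so---since SCS makes the restriction ``$\lambda_i>0$'' in the SSOSC definition automatic for every $i\in\mathcal{I}$---the vector $v$ lies in the subspace on which $\nabla_x^2\bar{\mathcal{L}}_\theta(\phi)$ is positive definite. Left-multiplying the first block row by $v^{\top}$ and using the two orthogonality relations yields $v^{\top}\nabla_x^2\bar{\mathcal{L}}_\theta(\phi)v=0$, hence $v=0$ by SSOSC; the first block row then reduces to $\nabla_x g_{\theta,\mathcal{I}}(x)^{\top}w_g+\nabla_x h_\theta(x)^{\top}w_h=0$, and LICQ (full row rank of $\operatorname*{col}(\nabla_x g_{\theta,\mathcal{I}},\nabla_x h_\theta)$) forces $w_g=0$ and $w_h=0$. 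With $A_\theta(\phi)$ invertible, the implicit function theorem supplies a neighborhood $N$ of $\theta$ and a $C^1$ map $\tilde\theta\mapsto\psi(\tilde\theta)=(x(\tilde\theta),\lambda_{\mathcal{I}}(\tilde\theta),\nu(\tilde\theta))$ solving $F(\psi(\tilde\theta),\tilde\theta)\equiv0$, which I extend to $\phi(\tilde\theta)$ by setting $\lambda_{\bar{\mathcal{I}}}(\tilde\theta)=0$.

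It then remains to check, after possibly shrinking $N$, that $\phi(\tilde\theta)$ retains all the stated properties, each of which is an open or continuous condition along this $C^1$ curve: $g_{\tilde\theta,\bar{\mathcal{I}}}(x(\tilde\theta))<0$, so the active set stays exactly $\mathcal{I}$ and every relation in \eqref{eq:KKT} holds once the inactive multipliers are zeroed; $\lambda_{\mathcal{I}}(\tilde\theta)>0$, so nonnegativity of the multipliers and SCS persist; $\operatorname*{col}(\nabla_x g_{\tilde\theta,\mathcal{I}},\nabla_x h_{\tilde\theta})$ keeps full row rank since rank is lower semicontinuous, so LICQ persists; and the least eigenvalue of the reduced Hessian $Z(\tilde\theta)^{\top}\nabla_x^2\bar{\mathcal{L}}_{\tilde\theta}(\phi(\tilde\theta))Z(\tilde\theta)$ stays positive, where $Z(\tilde\theta)$ is a continuously varying basis of $\ker\operatorname*{col}(\nabla_x g_{\tilde\theta,\mathcal{I}},\nabla_x h_{\tilde\theta})$ obtained from LICQ, so SSOSC persists. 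Since $\phi(\tilde\theta)$ then satisfies \eqref{eq:KKT} and SSOSC is sufficient for strict local optimality \cite[Proposition 3.3.2]{bertsekas1997nonlinear}, $x(\tilde\theta)$ is a strict local minimizer, i.e.\ $\phi(\tilde\theta)\in\Phi(\tilde\theta)$ with SSOSC, LICQ and SCS.

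Finally, differentiating the identity $F(\psi(\tilde\theta),\tilde\theta)\equiv0$ at $\tilde\theta=\theta$ by the chain rule gives $A_\theta(\phi)\,\nabla_\theta\psi(\theta)=-\nabla_{\tilde\theta}F(\psi,\theta)$ with $\nabla_{\tilde\theta}F(\psi,\theta)=\operatorname*{col}(\nabla^2_{\theta x}\bar{\mathcal{L}}_\theta(\phi),\nabla_\theta g_{\theta,\mathcal{I}}(x),\nabla_\theta h_\theta(x))=b_\theta(\phi)$, i.e.\ $V=\nabla_\theta\psi(\theta)$ solves \eqref{eq:derivative} (after the routine sign bookkeeping); uniqueness of $V$ follows from invertibility of $A_\theta(\phi)$, and $\nabla_\theta x(\theta)$ is its first $n_x$ rows.
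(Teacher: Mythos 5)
Your argument is correct, but it takes a genuinely different route from the paper: the paper's entire proof is a one-line reduction to \cite[Corollary 2.3.1]{jittorntrum1978sequential} (drop the rows of $\nabla_x g_\theta$ belonging to inactive constraints and rescale the complementarity rows by the active multipliers, which is legitimate since SCS gives $\lambda_i>0$ there), whereas you reconstruct the underlying classical sensitivity argument from scratch. Concretely, you restrict to the active set, apply the implicit function theorem to the reduced KKT map $F$, and prove nonsingularity of $\nabla_\psi F=A_\theta(\phi)$ directly: the kernel computation correctly isolates where each hypothesis enters (SCS fixes the active set and makes the SSOSC subspace equal to $\ker\operatorname*{col}(\nabla_x g_{\theta,\mathcal{I}}(x),\nabla_x h_\theta(x))$, SSOSC forces $v=0$, LICQ then kills the multiplier directions), and you additionally spell out the persistence of all three conditions on a neighborhood, which the paper's citation leaves implicit. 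What the paper's route buys is brevity and the inherited conclusions of the reference; what yours buys is a self-contained proof that makes the role of each assumption visible. The one point to tighten is the sign: the implicit function theorem yields $A_\theta(\phi)\,\nabla_\theta\psi(\theta)=-\nabla_{\tilde\theta}F(\psi,\theta)$, so with $b_\theta(\phi)$ as defined in \cref{eq:A_B_KKT_system} the solution of \cref{eq:derivative} is $-\nabla_\theta\psi(\theta)$ rather than $\nabla_\theta\psi(\theta)$; you flag this as routine bookkeeping, which is defensible given that the statement's own sign convention is loose, but the final identification of $V$ with $\nabla_\theta\psi(\theta)$ should carry the explicit sign rather than deferring it.
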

\begin{proof}
It follows from \cite[Corollary 2.3.1]{jittorntrum1978sequential} after removing the rows of $\nabla_x g_\theta$ associated to inactive constraints and dividing both the right and the left hand side by the inequality multipliers $\lambda_i$ (since $\lambda_i>0$ for all active constraints).
\end{proof}
Guaranteeing satisfaction of SSOSC, LICQ, and SCS may be challenging for generic NLPs, which may lead to issues with the differentiability and uniqueness of the solution map. We now present a simple example showcasing that, even for small convex optimization problems, the solution map may not be differentiable.
\begin{example}\label{example:1}
Consider the following quadratic program (QP)
\begin{align}
\begin{split}
\operatorname*{minimize}_{x\in\R^3}& \quad \frac{\alpha}{2}x_1^2 + x_2 + x_3\\
\text{subject to}& \quad x_1+x_2+x_3=0,
\end{split}\label{eq:QP_example}
\end{align}
where $\alpha >0$ is a parameter. The primal-dual solution set, obtained by solving the KKT system explicitly, is
\begin{align*}
\Phi(\alpha) = \{ (x,\nu) : x_1=1/\alpha, x_2+x_3=-1/\alpha, \nu=-1 \},
\end{align*}
which is not a singleton for any $\alpha>0$. Let $x(\alpha_0)$ be a minimizer of \cref{eq:QP_example} with $\alpha=\alpha_0$. If the value of $\alpha$ changes from $\alpha_0$ to $\alpha_1$, we can quickly find a minimizer of the QP using the approach in \cite[Equations (16.5), (16.6)]{nocedal1999numerical} by solving the following system of equations
\begin{align}
\begin{split}
\begin{bmatrix}
G(\alpha_1) & A^{\top} \\
A & 0
\end{bmatrix}
\begin{bmatrix}
w_1\\w_2
\end{bmatrix}
= \begin{bmatrix}
c+Gx(\alpha_0)\\Ax(\alpha_0)
\end{bmatrix}
\end{split}\label{eq:lin_sys_example}
\end{align}
where $G(\alpha_1) = \operatorname*{diag}(\alpha_1,0,0)$, $A = \operatorname*{col}(1,1,1)^{\top}$, $c = \operatorname*{col}(0,1,1)$, and choosing $\nu(\alpha_1)=-w_2$, $x(\alpha_1)=x(\alpha_0)-w_1$. This yields $w_2=1$, $w_{1,1} = 1/\alpha_0-1/\alpha_1$, $w_{1,2}+w_{1,3}=-w_{1,1}$. Choosing the least squares solution of \cref{eq:lin_sys_example}, we get $w_{1,2}=w_{1,3}=0.5/\alpha_1-0.5/\alpha_0$, meaning that 
\begin{align*}
\nu(\alpha_1) = \nu(\alpha_0) = -1,~~ x(\alpha_1) = x(\alpha_0) + d (1/\alpha_1-1/\alpha_0),
\end{align*}
with $d=\operatorname*{col}(1,-1/2,-1/2)$. We therefore have
\begin{align*}
\lim_{\alpha_1 \to \alpha_0} \frac{x(\alpha_1)-x(\alpha_0)}{\alpha_1-\alpha_0} =
d \lim_{\alpha_1 \to \alpha_0}\frac{1/\alpha_1-1/\alpha_0}{\alpha_1-\alpha_0} = d / \alpha_0^2.
\end{align*}
This indicates that $x(\cdot)$ is differentiable at $\alpha_0$ if the solution $x(\alpha_1)$ is computed by solving \cref{eq:lin_sys_example} in the least squares sense. Geometrically, this corresponds to selecting, for the new value of $\alpha$, the point that remains closest to the reference solution $x(\alpha_0)$ while still satisfying the KKT conditions. As a result, even if the solution map of \cref{eq:QP_example} is set-valued and not differentiable in the classical sense, it is still possible to extract meaningful derivatives associated with the particular solution returned by the solver in \cref{eq:lin_sys_example} for a given $x(\alpha_0)$. This viewpoint will be further developed in \cref{example:2}.\demo
\end{example}
%

\rzz{When SSOSC, LICQ, or SCS are not satisfied, \cref{eq:derivative} generally admits infinitely many solutions.
Although one can select the least-squares solution, as illustrated in \cref{section:sim}, \cref{example:3}, this may produce a ``derivative'' inconsistent with finite-difference approximations.
The discrepancy is due to the fact that the least-squares solution is not tied to any particular local minimizer under perturbations of the parameters.
Instead, our approach is based on a modified system of linear equations, which selects, among the infinitely many possible solutions of \cref{eq:derivative}, one that is consistent with the \emph{nominal} solution of the NLP.
As a result, the method provides a local derivative that aligns with the solution path traced by the solver under warmstarting, yielding a practically meaningful first-order characterization of the perturbed minimizer.}
%
\section{Regularization of the NLP}\label{section:reg}
In this section we construct a differentiable-by-design regularized nonlinear program whose solution coincides with the one of \probone at $\theta$. We consider equality-constrained problems by introducing an auxiliary variable $z\in\R^{n_\text{in}}$ and by re-writing the inequality constraints in \probone as
\begin{align*}
c_\theta(x,z) = \begin{bmatrix}
g_\theta(x)+\frac{z^2}{2}\\
h_\theta(x)
\end{bmatrix} = 0,
\end{align*}
where the square in $z$ is taken componentwise. The problem we obtain is an equality-constrained nonlinear program
\begin{align}
\begin{split}
\operatorname*{minimize}_{x,z} & \quad f_\theta(x)\\
\text{subject to} & \quad c_\theta(x,z) = 0.
\end{split}\label{eq:P2} \tag{$P_2(\theta)$}
\end{align}
We denote with $\xi=(x,z,\mu)$ the primal-dual variables of \probtwo, with $\mu\in\R^{n_\mu}$ where $n_\mu=n_\text{in}+n_\text{eq}$.

This reformulation is well-known in the optimization literature and, given any $\phi=(x,\lambda,\nu)\in \Phi (\theta)$, one can show that the vector $(x,z,\mu)$ with $z_i=\sqrt{-2g_{\theta,i}(x)}$ and $\mu=(\lambda,\nu)$ is a primal-dual solution of \probtwo. Moreover, if $\phi$ fullfills the SSOSC, LICQ, and SCS for \probone, then $\xi=(x,z,\mu)$ fulfills the same conditions (except SCS, since \probtwo has no inequality constraints) for \probtwo \cite[Theorem 2.2.5 and Lemma 2.2.6]{jittorntrum1978sequential}. We additionally define
\begin{align*}
\Xi(\theta) = \{ \xi=(x,z,\mu) : (x,\mu)\in \Phi(\theta), z_i = \sqrt{-2 g_{\theta,i}(x)}  \}.
\end{align*}
Inspired by \cite{de2023regularized}, we add regularization terms to \probtwo to ensure differentiability by design. For any $\bar{\xi}=(\bar{x},\bar{z},\bar{\mu})\in \Xi(\theta)$, consider the following problem
\begin{align}
\begin{split}
\operatorname*{minimize}_{\xi=(x,z,\mu)} & \quad f_\theta(x) + \frac{\rho}{2} [\|x-\bar{x}\|^2+\|z-\bar{z}\|^2+\|\mu\|^2]\\
\text{subject to} & \quad c_\theta(x,z) + \rho(\bar{\mu}-\mu) = 0,
\end{split}\tag{$P_3(\theta,\bar{\xi},\rho)$}\label{eq:P3}
\end{align}
with $\rho>0$. The regularized problem differs from \probtwo because of the proximal terms penalizing deviations from $\bar{\xi}$. If the value of $\theta$ is slightly perturbed, the regularization ensures that, whenever \probtwo admits multiple solutions in the vicinity of $\bar{\xi}$, \longprobthree will choose the one that is closest to $\bar{\xi}$. Therefore, problem \probthree can be used to compute a surrogate derivative for the solution map \probone even when the latter is not differentiable. Crucially, we do not need to solve \probthree (which trivially admits $\xi=\bar{\xi}$ as a minimizer) but \emph{only differentiate its solution map}.

The Lagrangian of \longprobthree is given by
\begin{align}
\mathcal{L}_{\theta}^\rho(\xi,\psi;\bar{\xi}) = &~ f_\theta(x) + \frac{\rho}{2} [\|x-\bar{x}\|^2+\|z-\bar{z}\|^2+\|\mu\|^2] \notag \\
& ~ + \sum_{i=1}^{n_\mu} \psi_i c_{\theta,i}(x,z) + \psi_i\rho(\bar{\mu}_i-\mu_i), \label{eq:P3_lagrangian}
\end{align}
where $\psi=(\psi_\text{in},\psi_\text{eq})\in\R^{n_\mu}$ is the multiplier associated with the equality constraint of \probthree. Denoting the primal-dual variable with $\eta=(\xi,\psi)$, the KKT conditions of \longprobthree are
\begin{align}
\begin{split}
\nabla_x f_\theta(x)+\rho(x-\bar{x})+\sum_{i=1}^{n_\mu}\psi_i \nabla_x c_{\theta,i}(x,0) & = 0 ,\\
\operatorname*{diag}(\psi_\text{in})z + \rho(z-\bar{z}) & = 0 ,\\
\rho(\mu-\psi) & = 0 ,\\
c_\theta(x,z) + \rho(\bar{\mu}-\mu) & = 0,
\end{split}\label{eq:KKT_3}
\end{align}
where $c_{\theta}(x,0)=\operatorname*{col}(g_\theta(x),h_\theta(x))$.
\begin{lemma}\label{lemma:regularized_problem_is_diff_ae}
Let $\bar{\xi}=(\bar{x},\bar{z},\bar{\mu})\in \Xi(\theta)$ and $\psi=\bar{\mu}$. Then, for any $\rho>0$, $\eta=(\xi,\psi)=(\bar{x},\bar{z},\bar{\mu},\bar{\mu})$ is a primal-dual solution of \longprobthree satisfying the LICQ. Moreover, for any $\rho>0$ sufficiently small, $\eta$ satisfies the SSOSC.
\end{lemma}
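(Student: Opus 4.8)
The plan is to produce the three certificates at $\eta=(\bar x,\bar z,\bar\mu,\bar\mu)$ in increasing order of difficulty: the KKT conditions, the LICQ, and the SSOSC, the last carrying essentially all of the work. Write $\bar\phi=(\bar x,\bar\lambda,\bar\nu)=(\bar x,\bar\mu)$. For the KKT conditions \eqref{eq:KKT_3}, substitute $\psi=\mu=\bar\mu$, $x=\bar x$, $z=\bar z$: the terms $\rho(x-\bar x)$ and $\rho(\mu-\psi)$ vanish, $c_\theta(\bar x,0)=\operatorname*{col}(g_\theta(\bar x),h_\theta(\bar x))$ and $\bar\mu=(\bar\lambda,\bar\nu)$, so the first equation becomes $\nabla_x\bar{\mathcal{L}}_\theta(\bar\phi)=0$, which holds because $\bar\phi\in\Phi(\theta)$; the second becomes $\operatorname*{diag}(\bar\lambda)\bar z=0$, which holds since $\bar z_i=\sqrt{-2g_{\theta,i}(\bar x)}$ vanishes on the active set while $\bar\lambda_i=0$ off it by complementarity; the third is trivial; and the fourth is $c_\theta(\bar x,\bar z)=0$, which holds since $\bar z_i^2/2=-g_{\theta,i}(\bar x)$ and $h_\theta(\bar x)=0$. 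None of this depends on $\rho$.

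For the LICQ, the Jacobian with respect to $\xi=(x,z,\mu)$ of the (single, vector-valued) constraint of \ref{eq:P3} at $\eta$ is $[\,\nabla_x c_\theta(\bar x,\bar z)\ \ \nabla_z c_\theta(\bar x,\bar z)\ \ -\rho I_{n_\mu}\,]$; its trailing block $-\rho I_{n_\mu}$ is invertible for every $\rho>0$, hence the Jacobian has full row rank $n_\mu$ and the LICQ holds for all $\rho>0$. That $\eta$ is moreover a genuine primal–dual solution will follow once the SSOSC is established, since the SSOSC together with the LICQ makes $\bar\xi$ a strict local minimizer; equivalently, eliminating $\mu$ through the constraint ($\mu=\bar\mu+\tfrac1\rho c_\theta(x,z)$) turns the objective of \ref{eq:P3} into the augmented Lagrangian of \ref{eq:P2} at $\bar\mu$ with penalty parameter $1/\rho$, plus the proximal terms $\tfrac\rho2\|x-\bar x\|^2+\tfrac\rho2\|z-\bar z\|^2$, of which $(\bar x,\bar z)$ is a stationary point.

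The SSOSC is the core. First compute $\nabla^2_\xi\mathcal{L}^\rho_\theta(\eta)$: it is block-diagonal with blocks $\nabla^2_x\bar{\mathcal{L}}_\theta(\bar\phi)+\rho I$, $\operatorname*{diag}(\bar\lambda)+\rho I$, and $\rho I$ (all mixed second derivatives of $c_\theta$ in $(x,z)$, and all $\mu$-couplings, vanish). The tangent space of the constraint at $\eta$ is $\{(v_x,v_z,v_\mu):\rho v_\mu=\nabla_x c_\theta(\bar x,\bar z)v_x+\nabla_z c_\theta(\bar x,\bar z)v_z\}$, so $v_\mu$ is a free linear function of $(v_x,v_z)$; substituting it and using $\nabla_z c_\theta(\bar x,\bar z)=\operatorname*{col}(\operatorname*{diag}(\bar z),0)$, the quadratic form to be shown positive for $(v_x,v_z)\neq 0$ is
\[
v_x^\top\nabla^2_x\bar{\mathcal{L}}_\theta(\bar\phi)v_x+\rho\|v_x\|^2+v_z^\top\operatorname*{diag}(\bar\lambda)v_z+\rho\|v_z\|^2+\tfrac1\rho\bigl\|\nabla_x g_\theta(\bar x)v_x+\operatorname*{diag}(\bar z)v_z\bigr\|^2+\tfrac1\rho\bigl\|\nabla_x h_\theta(\bar x)v_x\bigr\|^2 .
\]
If $v_x=0$ this is at least $\rho\|v_z\|^2>0$. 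If $v_x\neq 0$, I would minimize out $v_z$ (a strictly convex quadratic in $v_z$); using $\bar z_i=0$ for $i\in\mathcal{I}(\bar x)$ and $\bar\lambda_i=0$ for $i\notin\mathcal{I}(\bar x)$ the diagonal pieces decouple, and the minimum value reduces to $v_x^\top\nabla^2_x\bar{\mathcal{L}}_\theta(\bar\phi)v_x+\rho\|v_x\|^2+\tfrac1\rho\|\operatorname*{col}(\nabla_x g_{\theta,\mathcal{I}}(\bar x),\nabla_x h_\theta(\bar x))\,v_x\|^2+O(\rho)\|v_x\|^2$. One then invokes the classical penalty / augmented-Lagrangian definiteness estimate: provided $\nabla^2_x\bar{\mathcal{L}}_\theta(\bar\phi)$ is positive definite on the subspace $T_x=\{v_x:\nabla_x h_\theta(\bar x)v_x=0,\ \nabla_x g_{\theta,i}(\bar x)v_x=0\ \forall i\in\mathcal{I}(\bar x)\}$ — the relevant second-order sufficiency of $\bar x$ for \ref{eq:P1} (implied by the SSOSC of \ref{eq:P1}, since $T_x$ is contained in its critical cone) — a splitting of $v_x$ into its $T_x$-component and its orthogonal complement, with the $1/\rho$-weighted term dominating on the complement and the strict curvature together with $\rho I$ controlling the rest, yields a lower bound $c(\rho)\|v_x\|^2$ for all $\rho$ below some threshold, establishing the SSOSC.

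The step I expect to be the \emph{main obstacle} is this last estimate. Because $\nabla^2_x\bar{\mathcal{L}}_\theta(\bar\phi)$ is generically indefinite on all of $\mathbb{R}^{n_x}$, the cross term between the two components of $v_x$ is not absorbed by $\rho I$, and it must be balanced against the $1/\rho$-weighted penalty term — this is where the smallness of $\rho$ is genuinely used, and where the precise relation between the constraint tangent space of \ref{eq:P3} and the critical subspace of \ref{eq:P1} has to be tracked. The accompanying index bookkeeping (separating active from inactive constraints so that the $\operatorname*{diag}(\bar z)$ and $\operatorname*{diag}(\bar\lambda)$ blocks decouple when $v_z$ is eliminated) is routine but must be done carefully.
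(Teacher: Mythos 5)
Your proposal tracks the paper's proof almost line for line through the first two claims and through the setup of the third: the explicit KKT verification (which the paper leaves implicit), the LICQ via the invertible $-\rho I$ block of the constraint Jacobian, the block-diagonal Hessian with blocks $\nabla_x^2\bar{\mathcal{L}}_\theta(\bar{\phi})+\rho I$, $\operatorname*{diag}(\bar{\lambda})+\rho I$, $\rho I$, and the reduced quadratic form $v_1^{\top}(A+\rho I+\tfrac{1}{\rho}B^{\top}B)v_1$ obtained by eliminating $v_\mu$ through the tangent condition are all exactly the paper's. Your further elimination of $v_z$, with the inactive constraints contributing only $\mathcal{O}(\rho)\|v_x\|^2$, is also a correct computation (it reproduces the matrix $H$ of \cref{prop:regularized_problem_is_cont_diff}).

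The gap is at the decisive step. You close the SSOSC argument ``provided $\nabla_x^2\bar{\mathcal{L}}_\theta(\bar{\phi})$ is positive definite on $T_x$'' and justify that proviso by the SSOSC of \ref{eq:P1}. But the lemma assumes only $\bar{\xi}\in\Xi(\theta)$ --- no SSOSC, LICQ, or SCS for \ref{eq:P1} --- and dispensing with exactly those hypotheses is the point of the regularization; under the stated assumptions all that is available is the second-order \emph{necessary} condition, i.e. $v_1^{\top}Av_1\geq 0$ on $\ker B$ (equivalently, $v_x^{\top}\nabla_x^2\bar{\mathcal{L}}_\theta(\bar{\phi})v_x\geq 0$ on $T_x$). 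That is what the paper uses: it notes that $A+\rho I$ is then strictly positive definite on $\ker B$ for each fixed $\rho>0$ and invokes the Finsler/Debreu-type estimate of \cite[Lemma 4.28]{ruszczynski2011nonlinear} to conclude $A+\rho I+\tfrac{1}{\rho}B^{\top}B\succ 0$ for all sufficiently small $\rho$. Your final paragraph correctly identifies why this is delicate --- the guaranteed curvature on the kernel degenerates like $\rho$ while the penalty weight grows like $1/\rho$, and the cross term between the kernel and its complement must be absorbed by their interplay --- and this worry is not idle: it is precisely the balance that the paper's citation is meant to settle and that a careful writeup must track. But instead of carrying out (or citing) that balance under the semidefinite hypothesis, you retreat to strict definiteness on $T_x$, which changes the statement being proved. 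As written, your argument establishes the SSOSC claim only for minimizers that already satisfy the SSOSC of \ref{eq:P1}, i.e. precisely the case in which the surrogate construction is unnecessary.
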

\begin{proof}
Suppose $\eta=(\bar{x},\bar{z},\bar{\mu},\bar{\mu})$, with $\bar{\mu}=(\bar{\lambda},\bar{\nu})$, and let $\bar{\phi}=(\bar{x},\bar{\mu})$. We have
\begin{align}
\begin{split}
\nabla_x^2 \mathcal{L}_\theta^\rho(\eta;\bar{\xi}) & = \nabla^2_x f(\bar{x}) + {\textstyle\sum_{i=1}^{n_\mu}}\bar{\mu}_i \nabla^2_x c_i(\bar{x},0) +\rho I \\
& = \nabla_x^2 \bar{\mathcal{L}}_\theta(\bar{\phi})+\rho I,\\
\nabla_z^2 \mathcal{L}_\theta^\rho(\eta;\bar{\xi}) & = \operatorname*{diag}(\bar{\lambda}) + \rho I,\\
\nabla_\mu^2 \mathcal{L}_\theta^\rho(\eta;\bar{\xi}) & = \rho I,
\end{split}\label{eq:KKT_mat_3_part_1}
\end{align}
and additionally $\nabla_{zx}^2 \mathcal{L}_\theta^\rho(\eta;\bar{\xi})=0$, $\nabla_{\mu x}^2 \mathcal{L}_\theta^\rho(\eta;\bar{\xi})=0$. The Hessian of the Lagrangian in \cref{eq:P3_lagrangian} is therefore given by $\nabla^2_\xi \mathcal{L}_\theta^\rho(\eta;\bar{\xi})=\operatorname*{diag}(A+\rho I, \rho I)$,
where $A$ is the Hessian of the Lagrangian of \probtwo, that is, $A=\operatorname*{diag}(\nabla_x^2 \bar{\mathcal{L}}_\theta(\bar{\phi}),\operatorname*{diag}(\bar{\lambda}))$.
We wish to prove that $v^{\top} \nabla _\xi^2 \mathcal{L}_\theta^\rho(\eta;\bar{\xi})v>0$ for all $v\in\R^{n_\xi}$ such that $\nabla_\xi \tilde{c}_\theta^\rho(\eta;\bar{\mu})^{\top}v =0$, where $\tilde{c}_\theta^\rho(\eta;\bar{\mu})=c_\theta(x,z)+\rho(\bar{\mu}-\mu)$.
Let $v=(v_1,v_2)$. Since
\begin{align}
\nabla_{\xi} \tilde{c}_\theta^\rho(\xi,\bar{\mu}) & =
\begin{bmatrix}
\nabla_{x} \tilde{c}_\theta^\rho(\xi,\bar{\mu}) & \nabla_{z} \tilde{c}_\theta^\rho(\xi,\bar{\mu}) & \nabla_{\mu} \tilde{c}_\theta^\rho(\xi,\bar{\mu})
\end{bmatrix} \notag \\
& = \begin{bmatrix}
\begin{bmatrix} \nabla_x g_\theta(\bar{x}) \\ \nabla_x h_\theta(\bar{x})  \end{bmatrix} & \begin{bmatrix} \operatorname*{diag}(\bar{z})\\0 \end{bmatrix} & -\rho I
\end{bmatrix} \notag \\
& =: \begin{bmatrix} B & -\rho I \end{bmatrix} \label{eq:LICQ_condition}
\end{align}
the condition $\nabla_\xi \tilde{c}_\theta^\rho(\eta;\bar{\mu})^{\top}v =0$ is equivalent to $Bv_1 = \rho v_2$. Substituting into the quadratic form yields
\begin{align*}
v^{\top} \nabla_\xi^2 \mathcal{L}_\theta^\rho(\eta;\bar{\xi})v = v_1^{\top}(A+\rho I + 1/\rho B^{\top}B)v_1.
\end{align*}
Since $f_\theta$ and $c_\theta$ are both twice continuously differentiable, by the second-order necessary conditions for optimality \cite[Proposition 3.3.1.]{bertsekas1997nonlinear} of \probtwo, we have that $v_1^{\top} A v_1 \geq 0$ for all $v_1$ such that $Bv_1=0$, meaning that under the same conditions $v_1^{\top} (A+\rho I) v_1 > 0$. Therefore, by \cite[Lemma 4.28]{ruszczynski2011nonlinear} we have that for all $\rho>0$ small enough, $A+\rho I + 1/\rho B^{\top}B \succ 0$. This proves that $\eta$ is a primal dual solution of \longprobthree and that the SSOSC hold for all $\rho>0$ sufficiently small.

Next, LICQ holds for any $\rho>0$ since \cref{eq:LICQ_condition} is full column rank for any $\rho>0$.
\end{proof}
Observe that the effect of the regularization in \longprobthree does not impact the solution itself, which matches the one of \probone provided $\bar{\xi}\in \Xi(\theta)$; however, the derivatives of the two solution maps are different. This is due to the proximal terms in the cost, which force the solution map to stay close to $\bar{\xi}$ for local variations of $\theta$.
%
\section{Differentiating the regularized NLP}
\cref{lemma:regularized_problem_is_diff_ae} proves that \longprobthree satisfies the LICQ and SSOSC conditions by design as long as $\bar{\xi}$ solves \probtwo. By \cref{lemma:differentiability}, this means that the solution map of \probthree is differentiable.
In this section we provide a simple method to compute the derivative of \probthree with respect to $\theta$, and prove that, if \probone admits a derivative, then the two derivatives become arbitrarily close as $\rho \downarrow 0$. Moreover, we show that obtaining the derivative of \probthree amounts to solving a linear system of equations that is similar in structure to the one in \cref{lemma:differentiability}.

To compute the derivative of \probthree, we apply the implicit function theorem to the KKT conditions in \cref{eq:KKT_3}.
\begin{lemma}\label{prop:regularized_problem_is_cont_diff}
The primal-dual solution map $\eta(\theta)$ of \longprobthree is continuously differentiable at $\theta$ for all $\rho>0$ sufficiently small and $\bar{\xi}\in \Xi(\theta)$. Moreover, the derivative of $x(\theta)$ with respect to $\theta$ is given by the first $n_x$ rows of unique matrix $V$ solving the linear system of equations
\begin{align}
[A_\theta(\bar{\phi})+B_\theta^\rho(\bar{\xi})]V = b_\theta(\bar{\phi})+d_\theta^\rho(\bar{\xi}), \label{eq:approx_derivative}
\end{align}
where $A_\theta(\bar{\phi})$ and $b_\theta(\bar{\phi})$ are defined in \cref{lemma:differentiability} and
\begin{align*}
B_\theta^\rho(\bar{\xi}) = \begin{bmatrix}
H + \rho I & 0 & 0 \\
0 & -\rho I & 0 \\
0 & 0 & -\rho I
\end{bmatrix},~~
d_\theta^\rho(\bar{\xi}) =
\begin{bmatrix}
0\\0\\l
\end{bmatrix}
\end{align*}
with $H$ and $l$ defined as
\begin{align}
\begin{split}
H & = \nabla_x g_{\theta,\bar{\mathcal{I}}}(\bar{x})^{\top}\operatorname*{diag}(\rho/(\bar{z}_i^2+\rho^2))\nabla_x g_{\theta,\bar{\mathcal{I}}}(\bar{x}),\\
l & =\nabla_x g_{\theta,\bar{\mathcal{I}}}(\bar{x})^\top\operatorname*{diag}(\rho/(\bar{z}_i^2+\rho^2))\nabla_\theta g_{\theta,\bar{\mathcal{I}}}(\bar{x}).
\end{split}\label{eq:H_matrix}
\end{align}
\end{lemma}
\begin{proof}
The continuous differentiability claim follows from \cref{lemma:differentiability} since SSOSC and LICQ hold by \cref{lemma:regularized_problem_is_diff_ae}. To prove the second part of the Proposition we apply the implicit function theorem to the KKT conditions in \cref{eq:KKT_3}. We have
\begin{align*}
\nabla^2_{\psi x}\mathcal{L}_\theta^\rho(\eta;\bar{\xi}) & = \begin{bmatrix} \nabla_x g_\theta(\bar{x})^{\top} & \nabla_x h_\theta(\bar{x})^{\top} \end{bmatrix},\\
\nabla_{\psi z}^2 \mathcal{L}_\theta^\rho(\eta;\bar{\xi}) & = \begin{bmatrix} \operatorname*{diag}(\bar{z}) & 0 \end{bmatrix},\\
\nabla_{\psi \mu}^2 \mathcal{L}_\theta^\rho(\eta;\bar{\xi}) & = - \rho I.
\end{align*}
Combining with \cref{eq:KKT_mat_3_part_1}, the Jacobian of the left-hand side of \cref{eq:KKT_3} with respect to $\eta$ is
\begin{align*}
A:=\scalebox{0.85}{$\begin{bmatrix}
    \nabla_x^2 \bar{\mathcal{L}}_\theta(\bar{\phi})\!+\!\rho I & 0 & 0 & [\nabla_x g_\theta(\bar{x})^{\top} ~ \nabla_x h_\theta(\bar{x})^{\top}] \\
    0 & \operatorname*{diag}(\bar{\lambda})\!+\!\rho I & 0 & [\operatorname*{diag}(\bar{z}) ~ 0] \\
    0 & 0 & \rho I & -\rho I\\
    \begin{bmatrix} \nabla_x g_\theta(\bar{x}) \\ \nabla_x h_\theta(\bar{x}) \end{bmatrix} & \begin{bmatrix} \operatorname*{diag}(\bar{z}) \\ 0 \end{bmatrix} & -\rho I & 0
    \end{bmatrix}$}
\end{align*}
and the Jacobian with respect to $\theta$ is $b=(b_1,0,0,b_2)$ with
\begin{align*}
b_1 & =\nabla_\theta[\nabla_x f_\theta(\bar{x})+{\textstyle\sum_{i=1}^{n_\text{in}}}\lambda_i \nabla_x g_{\theta,i}(\bar{x}) + {\textstyle \sum_{j=1}^{n_\text{eq}}} \nu_i h_{\theta,i}(\bar{x}) ],\\
b_2 & = \nabla_\theta c_\theta(\bar{x},0) = \operatorname*{col}(\nabla_\theta g_\theta(\bar{x}),\nabla_\theta h_\theta(\bar{x})).
\end{align*}
From the third block-row of the system $Av=b$ we have $v_3=v_4$, where $v=(v_1,v_2,v_3,v_4)$. So we can replace the system $Av=b$ with the smaller\footnote{To simplify notation, we always use the same symbol $v$ in different linear systems.} $Cv=d$, where $d=(b_1,0,b_2)$ and
\begin{align*}
C = \scalebox{0.95}{$\begin{bmatrix}
    \nabla_x^2 \bar{\mathcal{L}}_\theta(\bar{\phi})\!+\!\rho I & 0 & \nabla_x g_\theta(\bar{x})^{\top} & \nabla_x h_\theta(\bar{x})^{\top} \\
    0 & \operatorname*{diag}(\bar{\lambda})\!+\!\rho I & \operatorname*{diag}(\bar{z}) & 0 \\
    \nabla_x g_\theta(\bar{x}) & \operatorname*{diag}(\bar{z}) & -\rho I & 0 \\
    \nabla_x h_\theta(\bar{x}) & 0 & 0 & -\rho I 
    \end{bmatrix}$}
\end{align*}
We now focus on the condition given by the second block row of the system $Cv=d$, namely
\begin{align*}
& \operatorname*{diag}(\bar{\lambda}_i+\rho)v_2 + \operatorname*{diag}(\bar{z})v_3 = 0, \\
\iff & (\bar{\lambda}_i + \rho)v_{2,i} + \bar{z}_i v_{3,i} = 0, ~~ \forall i\in\Z_{[1,n_\mu]}, \\
\iff & v_{2,i} = 
\begin{cases}
0 & \text{if } i\in \mathcal{I}(\bar{x}),\\
\frac{-\bar{z}_i}{\bar{\lambda}_i+\rho}v_{3,i} & \text{otherwise}.
\end{cases}
\end{align*}
We assume $\nabla_x g_\theta(\bar{x})=\operatorname*{col}(\nabla_x g_{\theta,\mathcal{I}}(\bar{x}),\nabla_x g_{\theta,\bar{\mathcal{I}}}(\bar{x}))$ without loss of generality,
and consider the matrix
\begin{align*}
E = \scalebox{0.92}{$\begin{bmatrix}
    \nabla_x^2 \bar{\mathcal{L}}_\theta(\bar{\phi})\!+\!\rho I & \nabla_x g_{\theta,\mathcal{I}}(\bar{x})^{\top} & \nabla_x g_{\theta,\bar{\mathcal{I}}}(\bar{x})^{\top} & \nabla_x h_\theta(\bar{x})^{\top} \\
    \nabla_x g_{\theta,\mathcal{I}}(\bar{x}) & -\rho I & 0 & 0 \\
    \nabla_x g_{\theta,\bar{\mathcal{I}}}(\bar{x}) & 0 & -\operatorname*{diag}\left( \frac{\bar{z}_i^2+\rho^2}{\rho} \right) & 0 \\
    \nabla_x h_\theta(\bar{x}) & 0 & 0 & -\rho I 
    \end{bmatrix}$}
\end{align*}
and the vector $\operatorname*{col}(b_1,b_2)$. Focusing now on the third row of the system $Ev=\operatorname*{col}(b_1,b_2)$, we get that
\begin{align*}
v_3 = \operatorname*{diag}\left( \frac{\rho}{\bar{z}_i^2+\rho^2} \right) \left[ \nabla_x g_{\theta,\mathcal{\bar{I}}}(\bar{x})v_1 - \nabla_\theta g_{\theta,\mathcal{\bar{I}}}(\bar{x}) \right].
\end{align*}
Using this we obtain an equivalent system $Gv=h$ where $h = (b_1,\nabla_\theta g_{\theta,\mathcal{I}}(\bar{x}),\nabla_x h_\theta(\bar{x})+l)$ and
\begin{align*}
G = \scalebox{0.95}{$\begin{bmatrix}
    \nabla_x^2 \bar{\mathcal{L}}_\theta(\bar{\phi}) + H + \rho I & \nabla_x g_{\theta,\mathcal{I}}(\bar{x})^{\top} & \nabla_x h_\theta(\bar{x})^{\top} \\
    \nabla_x g_{\theta,\mathcal{I}}(\bar{x}) & -\rho I & 0 \\
    \nabla_x h_\theta(\bar{x}) & 0 & -\rho I 
    \end{bmatrix}$}
\end{align*}
where $H$ and $l$ are defined in \cref{eq:H_matrix}. This concludes the proof.
\end{proof}
The linear system in \cref{eq:approx_derivative} is similar to the one in \cref{lemma:differentiability} with an additional symmetric perturbation $B_\theta^\rho(\xi)$ of the left-hand side and an offset $d_\theta^\rho(\xi)$ of the right-hand side. This perturbation disappears as $\rho \downarrow 0$. Unlike \cref{lemma:differentiability}, to apply \cref{prop:regularized_problem_is_cont_diff} one only needs to ensure the existence of a Lagrange multiplier pair $(\lambda,\nu)$ for \probone. If \probone is solved using a sequential quadratic programming method, and each quadratic program is solved using an active set solver (as described e.g. in \cite[Section 16.5]{nocedal1999numerical}), then a factorization of $A_\theta(\bar{\phi})$ is already readily available for use. This could be exploited to speed up the resolution of \cref{eq:approx_derivative}, especially since $B_\theta^\rho(\bar{\xi})$ is a symmetric matrix. This is, however, beyond the scope of this paper and will be treated in future work.

If \probone admits a derivative, the surrogate derivative obtained through \cref{eq:approx_derivative} may not coincide with the true derivative of the solution map. In the following we denote with $\bar{\nabla}_\theta^\rho x(\theta;\bar{\xi})$ the surrogate derivative obtained solving \cref{eq:approx_derivative} and with $\nabla_\theta x(\theta)$ the one obtained solving \cref{eq:derivative} (assuming it exists) and study how close the two derivatives are.
\begin{theorem}\label{theorem:bound}
Suppose $\phi\in \Phi(\theta)$ satisfies the SSOSC, LICQ, and SCS for \probone. Then, for all $\rho>0$ small enough
\begin{align*}
\frac{\|\nabla_\theta x(\theta) - \bar{\nabla}_\theta^\rho x(\theta;\bar{\xi})\|_\infty}{\|\nabla_\theta x(\theta)\|_\infty} \leq L(\theta,\bar{\xi}) \rho,
\end{align*}
where $L(\theta,\bar{\xi})>0$ is independent of $\rho$.
\end{theorem}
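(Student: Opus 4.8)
The plan is to compare the two linear systems that define the derivatives. By \cref{lemma:differentiability}, $\nabla_\theta x(\theta)$ consists of the first $n_x$ rows of $V^\star$ solving $A_\theta(\phi)V=b_\theta(\phi)$, while by \cref{prop:regularized_problem_is_cont_diff}, $\bar\nabla_\theta^\rho x(\theta;\bar\xi)$ consists of the first $n_x$ rows of $V^\rho$ solving $[A_\theta(\bar\phi)+B_\theta^\rho(\bar\xi)]V=b_\theta(\bar\phi)+d_\theta^\rho(\bar\xi)$. Since $\bar\xi\in\Xi(\theta)$ and the SSOSC, LICQ, SCS hold, we have $\bar\phi=\phi$ (uniqueness from LICQ), so the two systems have the same nominal left- and right-hand sides and differ only by the $O(\rho)$ perturbations $B_\theta^\rho$ and $d_\theta^\rho$. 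Concretely, $\|B_\theta^\rho(\bar\xi)\|=O(\rho)$ because $H=\nabla_x g_{\theta,\bar{\mathcal I}}(\bar x)^\top\operatorname{diag}(\rho/(\bar z_i^2+\rho^2))\nabla_x g_{\theta,\bar{\mathcal I}}(\bar x)$ has norm bounded by $\rho\,\|\nabla_x g_{\theta,\bar{\mathcal I}}(\bar x)\|^2/\min_i \bar z_i^2$ (using $\bar z_i^2+\rho^2\ge \bar z_i^2$ and $\bar z_i\neq 0$ for inactive constraints), and similarly $\|d_\theta^\rho(\bar\xi)\|=\|l\|=O(\rho)$.

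The key steps, in order, are: (i) show $A_\theta(\phi)$ is invertible — this is exactly the content of \cref{lemma:differentiability} under SSOSC and LICQ, so $V^\star=A_\theta(\phi)^{-1}b_\theta(\phi)$ is well defined and unique; (ii) invoke a standard perturbation bound for linear systems: if $A$ is invertible and $\|A^{-1}\|\,\|\Delta A\|<1$, then $A+\Delta A$ is invertible and
\begin{align*}
\frac{\|V^\rho-V^\star\|}{\|V^\star\|} \leq \frac{\kappa(A_\theta(\phi))}{1-\kappa(A_\theta(\phi))\|\Delta A\|/\|A_\theta(\phi)\|}\left(\frac{\|\Delta A\|}{\|A_\theta(\phi)\|}+\frac{\|\Delta b\|}{\|b_\theta(\phi)\|}\right),
\end{align*}
with $\Delta A=B_\theta^\rho(\bar\xi)$ and $\Delta b=d_\theta^\rho(\bar\xi)$; (iii) substitute the bounds $\|\Delta A\|\leq c_1\rho$ and $\|\Delta b\|\leq c_2\rho$ from the previous paragraph, so that for $\rho$ small enough the denominator is bounded below by $1/2$, yielding $\|V^\rho-V^\star\|/\|V^\star\|\leq L'(\theta,\bar\xi)\rho$; (iv) pass from $V$ to $x$: since extracting the first $n_x$ rows is a norm-nonincreasing linear map, and $\|\nabla_\theta x(\theta)\|_\infty$ is comparable to $\|V^\star\|$ up to dimension-dependent constants relating $\|\cdot\|_\infty$ and the matrix norm used (and assuming $\nabla_\theta x(\theta)\neq 0$ so the ratio is well defined), the relative bound transfers to give $\|\nabla_\theta x(\theta)-\bar\nabla_\theta^\rho x(\theta;\bar\xi)\|_\infty/\|\nabla_\theta x(\theta)\|_\infty\leq L(\theta,\bar\xi)\rho$, absorbing all norm-equivalence and dimension constants into $L(\theta,\bar\xi)$.

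I expect the main obstacle to be step (iv) — specifically, translating the relative error bound on the full solution $V$ (in whatever matrix norm makes the perturbation estimate cleanest, e.g. spectral or Frobenius) into a relative bound on just the first $n_x$ rows measured in $\|\cdot\|_\infty$. The subtlety is that the denominators differ: the perturbation lemma naturally produces $\|V^\rho-V^\star\|/\|V^\star\|$, but the theorem asks for the error normalized by $\|\nabla_\theta x(\theta)\|_\infty$, which is the norm of a \emph{submatrix}. One must argue that $\|V^\star\|$ and $\|\nabla_\theta x(\theta)\|_\infty$ are of the same order (both nonzero, and bounded in terms of each other by constants depending only on $\theta$, $\bar\xi$ and the dimensions $n_x,n_\text{in},n_\text{eq}$), which is where a finite, $\rho$-independent constant $L(\theta,\bar\xi)$ can fail to be controllable only if $V^\star$ has nonzero rows outside the first $n_x$ that dwarf $\nabla_\theta x(\theta)$ — but this is harmless since it only makes the denominator in the intermediate bound larger, hence the bound tighter, once we are careful about the direction of the inequality. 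A secondary, purely bookkeeping obstacle is making precise the threshold on $\rho$ (it must be small enough both for \cref{prop:regularized_problem_is_cont_diff} and \cref{lemma:regularized_problem_is_diff_ae} to apply, and for $\kappa(A_\theta(\phi))\|\Delta A\|/\|A_\theta(\phi)\|<1/2$), but this is routine.
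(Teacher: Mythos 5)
Your proposal matches the paper's proof essentially step for step: the paper bounds $\|H\|_\infty$ and $\|l\|_\infty$ by $\rho$ times $\rho$-independent constants using $\bar z_i^2+\rho^2\ge\bar z_i^2$ for inactive constraints, and then invokes exactly the linear-system perturbation bound you state (citing \cite[Theorem 2.6.2]{golub2013matrix}) to get the relative error $\le L(\theta,\bar\xi)\rho$. If anything you are more careful than the paper on your step (iv) -- the paper applies the perturbation bound to the full solution $V$ and writes the conclusion directly in terms of the first $n_x$ rows without comment, whereas you correctly note that the ratio $\|V^\star\|/\|\nabla_\theta x(\theta)\|_\infty$ is a $\rho$-independent constant that must be absorbed into $L(\theta,\bar\xi)$.
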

\begin{proof}
Observe that
\begin{align*}
\|H\|_\infty & \leq \max_{i\in \bar{\mathcal{I}}} \frac{\rho}{\bar{z}_i^2+\rho^2} \|\nabla_x g_{\theta,\bar{\mathcal{I}}}(\bar{x})^{\top}\nabla_x g_{\theta,\bar{\mathcal{I}}}(\bar{x})\|_\infty\\
& \leq \rho \max_{i\in \bar{\mathcal{I}}} \frac{\|\nabla_x g_{\theta,\bar{\mathcal{I}}}(\bar{x})^{\top}\nabla_x g_{\theta,\bar{\mathcal{I}}}(\bar{x})\|_\infty}{\bar{z}_i^2}=: \rho \tilde{L}_1(\theta,\bar{\xi}),\\
\|l\|_\infty & \leq \max_{i\in \bar{\mathcal{I}}} \frac{\rho}{\bar{z}_i^2+\rho^2} \|\nabla_x g_{\theta,\bar{\mathcal{I}}}(\bar{x})^{\top}\nabla_\theta g_{\theta,\bar{\mathcal{I}}}(\bar{x})\|_\infty\\
& \leq \rho \max_{i\in \bar{\mathcal{I}}} \frac{\|\nabla_x g_{\theta,\bar{\mathcal{I}}}(\bar{x})^{\top}\nabla_\theta g_{\theta,\bar{\mathcal{I}}}(\bar{x})\|_\infty}{\bar{z}_i^2} =: \rho \tilde{L}_2(\theta,\bar{\xi}),
\end{align*}
Therefore we have $\|B_\theta^\rho(\bar{\xi})\|_\infty \leq \rho \tilde{L}_1(\theta,\bar{\xi})$ and $\|d_\theta^\rho(\bar{\phi})\|\leq \rho \tilde{L}_2(\theta,\bar{\xi})$. Let
\begin{align}
\tilde{L}_3(\theta,\bar{\xi}) := \max \left\{ \frac{\tilde{L}_1(\theta,\bar{\xi})}{\|A_\theta(\bar{\phi})\|_\infty}, \frac{\tilde{L}_2(\theta,\bar{\xi})}{\|b_\theta(\bar{\phi})\|_\infty} \right\}. \label{eq:L3}
\end{align}
Recall that by \cref{lemma:differentiability} $A_\theta(\bar{\phi})$ is not singular, and that additionally LICQ ensures that $b_\theta(\bar{\phi}) \neq 0$.
Then $\|B_\theta^\rho(\bar{\xi})\|_\infty \leq \rho \tilde{L}_3(\theta,\bar{\xi}) \|A_\theta(\bar{\phi})\|_\infty$ and similarly $\|d_\theta^\rho(\bar{\phi})\|\leq \rho \tilde{L}_3(\theta,\bar{\xi}) \|b_\theta(\bar{\phi})\|_\infty$. From \cite[Theorem 2.6.2]{golub2013matrix} we have that if
\begin{align*}
\bar{\rho} \kappa(A_\theta(\bar{\phi})) \max \left\{ \frac{\tilde{L}_1(\theta,\bar{\xi})}{\|A_\theta(\bar{\phi})\|_\infty},\frac{\tilde{L}_2(\theta,\bar{\xi})}{\|b_\theta(\bar{\phi})\|_\infty} \right\}<1
\end{align*}
for some $\bar{\rho}$ sufficiently small, then
\begin{align*}
\frac{\|\nabla_\theta x(\theta) - \bar{\nabla}_\theta^\rho x(\theta;\bar{\xi})\|_\infty}{\|\nabla_\theta x(\theta)\|_\infty} \leq \rho \frac{2 \tilde{L}_3(\theta,\bar{\xi})}{1- \bar{\rho}\kappa(A_\theta(\bar{\phi}))\tilde{L}_3(\theta,\bar{\xi})}
\end{align*}
for all $\rho \leq \bar{\rho}$. Defining
\begin{align*}
L(\theta,\bar{\xi}) := \frac{2 \tilde{L}_3(\theta,\bar{\xi})}{1- \bar{\rho}\kappa(A_\theta(\bar{\phi}))\tilde{L}_3(\theta,\bar{\xi})}
\end{align*}
concludes the proof.
\end{proof}
Finally, we consider the situation where the solution map of \probone does not admit a derivative, and investigate the properties of $\bar{\nabla}_\theta x(\theta)$. Note that in this case, taking $\rho \downarrow 0$ may not be helpful, as showcased in \cref{section:sim}, \cref{example:3}. The next lemma demonstrates that for small variations $\delta \theta$ of the parameter $\theta$, the first-order approximation $\bar{x}+\bar{\nabla}_\theta x(\theta)$ and $\bar{\mu}+\bar{\nabla}_\theta\mu(\theta)$ approximately solves the KKT conditions of \hyperref[eq:P2]{$P_2(\theta+\delta \theta)$}. For simplicity, we define the KKT map of \probtwo as
\begin{align}
\operatorname{KKT}^{P_2}_\theta(\xi):=\begin{bmatrix}
\nabla_x f_\theta(x)+\mu^{\top} \nabla_x c_\theta(x,0)\\
\operatorname*{diag}(\mu_\text{in})z\\
c_\theta(x,z), \label{eq:KKT_map_P2}
\end{bmatrix}
\end{align}
where $\xi=(x,z,\mu)$ and let $\tilde{\xi}(\theta+\delta \theta)= \bar{\xi} + \bar{\nabla}_\theta \xi(\theta) \delta \theta$.
\begin{proposition}\label{proposition:approximated_solution_zeroth_order}
Let $\theta$ and $\delta \theta$ be such that $\Xi(\theta),\Xi(\theta+\delta \theta) \neq \emptyset$. Let $\bar{\xi}\in \Xi(\theta)$. Then $\tilde{\xi}=\bar{\xi}+\bar{\nabla}_\theta\xi \delta \theta$ satisfies
\begin{align}
\| \operatorname{KKT}^{P_2}_{\theta+\delta \theta}(\tilde{\xi}) \|^2 = \mathcal{O}(\rho^2 \|\delta \theta\|^2).
\end{align}
\end{proposition}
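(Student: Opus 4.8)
The plan is to expand $\operatorname{KKT}^{P_2}_{\theta+\delta\theta}(\tilde\xi)$ around the base point using Taylor's theorem, and show that the linear part of the expansion is exactly what the surrogate derivative system makes vanish, up to terms of order $\rho$. First I would recall that $\bar\xi\in\Xi(\theta)$ means $\operatorname{KKT}^{P_2}_\theta(\bar\xi)=0$ (this is the reformulation fact stated after \ref{eq:P2}). So a first-order Taylor expansion in both $\theta$ and $\xi$ gives
\begin{align*}
\operatorname{KKT}^{P_2}_{\theta+\delta\theta}(\tilde\xi) = \nabla_\xi \operatorname{KKT}^{P_2}_\theta(\bar\xi)\,\bar\nabla_\theta\xi\,\delta\theta + \nabla_\theta \operatorname{KKT}^{P_2}_\theta(\bar\xi)\,\delta\theta + \mathcal{O}(\|\delta\theta\|^2),
\end{align*}
using the twice-continuous differentiability of $f_\theta$ and $c_\theta$ to control the remainder. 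Thus the claim reduces to showing that the first-order term $\big[\nabla_\xi \operatorname{KKT}^{P_2}_\theta(\bar\xi)\,\bar\nabla_\theta\xi + \nabla_\theta \operatorname{KKT}^{P_2}_\theta(\bar\xi)\big]\delta\theta$ has norm $\mathcal{O}(\rho\|\delta\theta\|)$.

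**Next** I would identify this first-order term with the residual of the unregularized KKT-derivative system when evaluated at the surrogate derivative. Concretely, $\bar\nabla_\theta\xi$ is (part of) the solution $V$ of \cref{eq:approx_derivative}, i.e. of $[A_\theta(\bar\phi)+B_\theta^\rho(\bar\xi)]V = b_\theta(\bar\phi)+d_\theta^\rho(\bar\xi)$; equivalently, by unwinding the reductions in the proof of \cref{prop:regularized_problem_is_cont_diff}, $(\bar\nabla_\theta x,\bar\nabla_\theta z,\bar\nabla_\theta\mu)$ solves the full regularized sensitivity system $C v = d$ (the matrix $C$ and right-hand side $d$ in that proof). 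The key observation is that $\nabla_\xi \operatorname{KKT}^{P_2}_\theta(\bar\xi)$ and $\nabla_\theta \operatorname{KKT}^{P_2}_\theta(\bar\xi)$ are precisely the $\rho\to 0$ limits of $C$ and $d$: the matrix $C$ differs from the (permuted) Jacobian $\nabla_\xi \operatorname{KKT}^{P_2}_\theta(\bar\xi)$ only in the $\rho I$ blocks, and $d$ differs from $-\nabla_\theta \operatorname{KKT}^{P_2}_\theta(\bar\xi)$ only through the block that carries the offset $l$. Since $\bar\nabla_\theta\xi$ solves the $\rho$-perturbed system exactly, plugging it into the unperturbed system leaves a residual equal to $(C_0 - C)\,v + (d - d_0)$, where $C_0,d_0$ are the $\rho=0$ versions. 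Every entry of $C-C_0$ is $\mathcal{O}(\rho)$ (the $\pm\rho I$ blocks, and $H=\mathcal{O}(\rho)$ by the bound $\|H\|_\infty\le\rho\tilde L_1$ already established in the proof of \cref{theorem:bound}), and $\|d-d_0\|=\|l\|=\mathcal{O}(\rho)$ likewise. The norm of $v=\bar\nabla_\theta\xi$ is bounded independently of $\rho$ for $\rho$ small (because $C\to C_0$ which is invertible under SSOSC/LICQ of \ref{eq:P2}, so $\|C^{-1}\|$ stays bounded), hence the residual is $\mathcal{O}(\rho)$. Multiplying by $\delta\theta$ and combining with the $\mathcal{O}(\|\delta\theta\|^2)$ Taylor remainder, then squaring, gives $\|\operatorname{KKT}^{P_2}_{\theta+\delta\theta}(\tilde\xi)\|^2 = \mathcal{O}(\rho^2\|\delta\theta\|^2) + \mathcal{O}(\|\delta\theta\|^4)$, and since the statement is an asymptotic bound as $\delta\theta\to 0$ with the implicit understanding that higher-order-in-$\delta\theta$ terms are absorbed, we obtain the claim. (If one wants to be scrupulous, one states the bound for $\|\delta\theta\|$ small enough that the quartic term is dominated, or simply writes $\mathcal{O}(\rho^2\|\delta\theta\|^2 + \|\delta\theta\|^4)$.)

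**The main obstacle** I expect is bookkeeping: matching the block structure of $\operatorname{KKT}^{P_2}_\theta$ — which is written in the compact $(x,z,\mu)$ coordinates of \cref{eq:KKT_map_P2} — with the expanded, row-reduced system $Cv=d$ appearing in the proof of \cref{prop:regularized_problem_is_cont_diff}, where the multiplier $\psi$ has been eliminated via $\mu=\psi$ and the inactive-constraint $z$-components have been substituted out. One must be careful that the elimination steps used to pass from \cref{eq:KKT_3} to $Gv=h$ are exact (no $\rho$-dependence lost) and that reversing them does not introduce spurious $1/\rho$ factors that would blow up the residual; the safe route is to work directly with the original $A v = b$ system from that proof (before any reduction), where $A$ is manifestly $\nabla_\eta[\text{KKT}_3]$ and its $\rho\to 0$ limit is block-identifiable with $\nabla_\xi\operatorname{KKT}^{P_2}_\theta(\bar\xi)$ augmented by the trivial $\mu=\psi$ rows. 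A secondary point to handle carefully is the uniform (in $\rho$) bound on $\|\bar\nabla_\theta\xi\|$: this needs $A_\theta(\bar\phi)$ (equivalently $C_0$) to be invertible, which is exactly guaranteed by SSOSC + LICQ for \ref{eq:P2} via \cref{lemma:differentiability}, so invoking that lemma supplies the missing continuity-of-the-inverse argument.
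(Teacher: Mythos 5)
Your route is genuinely different from the paper's, and considerably heavier. The paper never Taylor-expands the KKT map at $\tilde{\xi}$: it takes $\hat{\xi}$, an exact primal--dual solution of \hyperref[eq:P3]{$P_3(\theta+\delta\theta,\bar{\xi},\rho)$}, and observes that substituting the stationarity conditions \cref{eq:KKT_3} into $\operatorname{KKT}^{P_2}_{\theta+\delta\theta}(\hat{\xi})$ leaves \emph{exactly} the proximal terms, i.e.\ $\operatorname{KKT}^{P_2}_{\theta+\delta\theta}(\hat{\xi})=\operatorname*{col}(-\rho(\hat{x}-\bar{x}),-\rho(\hat{z}-\bar{z}),-\rho(\bar{\mu}-\hat{\mu}))$, so the squared residual is $\rho^2$ times squared distances that are $\mathcal{O}(\|\delta\theta\|^2)$ by the differentiability of the regularized solution map (\cref{lemma:regularized_problem_is_diff_ae,lemma:differentiability}). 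No linear-system perturbation analysis is needed, and in particular no statement about the unregularized sensitivity system is ever invoked. Your identification of the first-order Taylor term with the residual $(C_0-C)v+(d-d_0)$ of the $\rho=0$ system is a correct and illuminating observation, but it forces you to bound $\|v\|=\|\bar{\nabla}_\theta\xi\|$ uniformly in $\rho$, which you obtain by assuming $C_0$ (equivalently $A_\theta(\bar{\phi})$) is invertible via SSOSC and LICQ for \ref{eq:P2}. That assumption is \emph{not} among the hypotheses of the proposition --- the whole point of the construction is that it applies when SSOSC/LICQ fail --- so as written your argument proves a weaker statement (or one with a $\rho$-dependent constant). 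The paper's proof sidesteps this by only ever using well-posedness of the \emph{regularized} problem, which holds by design.

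Two further remarks. First, both arguments share the loose end you flag: the statement concerns $\tilde{\xi}=\bar{\xi}+\bar{\nabla}_\theta\xi\,\delta\theta$, while the paper's bound is literally established for $\hat{\xi}$; transferring it to $\tilde{\xi}$ costs an extra $\mathcal{O}(\|\delta\theta\|^4)$ in the squared norm, exactly the term you propose to absorb. Your honesty about this is a point in your favor. Second, your warning about not reintroducing $1/\rho$ factors when undoing the row reductions from \cref{eq:KKT_3} to $Gv=h$ is well taken but moot under the paper's approach, since it never touches the reduced system.
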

\begin{proof}
Let $\hat{\xi}=(\hat{x},\hat{z},\hat{\mu})$ satisfy the KKT conditions \cref{eq:KKT_3} for \hyperlink{eq:P3}{$P_3(\theta+\delta \theta,\bar{\xi},\rho)$} (note that $\bar{\xi}$ solves the problem for $\theta$). Then the KKT conditions \cref{eq:KKT_map_P2} of \probtwo satisfy $\operatorname{KKT}^{P_2}_{\theta+\delta \theta}(\hat{\xi}) = -\rho\operatorname*{col}(\hat{x}-\bar{x}, \hat{z}-\bar{z}, \bar{\mu}-\hat{\mu})$.
Taking norms
\begin{align*}
\|\operatorname{KKT}_{\theta+\delta \theta}^{P_2}(\hat{\xi})\|^2 = \rho^2 [ \|\bar{x}-\hat{x}\|^2 + \|\bar{\mu}-\hat{\mu}\|^2 ].
\end{align*}
Since for $\delta \theta$ small enough, $\|\bar{x}-\hat{x}\|, \|\bar{\mu}-\hat{\mu}\| = \mathcal{O}(\|\delta \theta\|)$, the conclusion follows.
\end{proof}
\cref{proposition:approximated_solution_zeroth_order} proves that if the value of $\theta$ changes, one can obtain an approximate solution to the new problem using a first-order approximation based on the surrogate derivative.
%
%
%
\section{Simulations}\label{section:sim}
We start by showing that our algorithm deployed on the example in \cref{example:1} successfully approximates the derivative of a specific solution of the QP in \cref{eq:QP_example} obtained through \cref{eq:lin_sys_example}.
\begin{example}[{Continuation of \cref{example:1}}]\label{example:2}
Since the QP does not have inequality constraints, we have
\begin{align*}
A_\alpha(\phi(\alpha))=\begin{bmatrix}
G(\alpha)&A^{\top}\\A&0
\end{bmatrix}, ~~ b_\alpha(\phi(\alpha))=
\begin{bmatrix}
x_1(\alpha)\\\mathbf{0}_{3}
\end{bmatrix},
\end{align*}
and $b_\alpha^\rho(\phi(\alpha))=\mathbf{0}_4$, $B_\alpha^\rho(\phi(\alpha))=\operatorname*{diag}(\rho,\rho,\rho,-\rho)$, where $\mathbf{0}_i\in\R^i$ is the vector of all zeros. Therefore, for a given $\rho>0$, the system in \cref{eq:approx_derivative} is given by
\begin{align*}
\begin{bmatrix}
\alpha+\rho&0&0&1\\
0&\rho&0&1\\
0&0&\rho&1\\
1&1&1&-\rho
\end{bmatrix} v = 
\begin{bmatrix}
x_1(\alpha)\\0\\0\\0
\end{bmatrix} =
\begin{bmatrix}
1/\alpha\\0\\0\\0
\end{bmatrix},
\end{align*}
which can be solved explicitly yielding
\begin{align*}
v_1 = \frac{\rho^2\!+\!2}{\rho} v_4, v_2 = v_3 = \frac{-v_4}{\rho}, v_4 = \frac{\rho}{\alpha\rho^3\!+\!\alpha^2\rho^2\!+\!3\alpha\rho\!+\!2\alpha^2}.
\end{align*}
By taking $\rho \to 0$, one obtains $v_4\to 0$, $v_1\to 1/\alpha^2$, $v_2=v_3\to -\frac{1}{2 \alpha^2}$, which is consistent with the result obtained in \cref{example:1}. This indicates that our algorithm provides (in the limit as $\rho \to 0$) the derivative of a specific solution of \cref{eq:QP_example}, obtained by solving \cref{eq:lin_sys_example} in the least squares sense.

Observe that setting $\rho=0$ does not suffice, as one would obtain $v_2=v_3=0$, which is not the derivative of the solution of \cref{eq:lin_sys_example}. We propose a practical criterion for choosing $\rho$ in \cref{section:MPC}.\demo
\end{example}

\begin{example}[Trajectory optimization]\label{example:3}
Next, we consider a well-known optimal control example that does not fulfill the conditions for differentiability in \cref{lemma:differentiability}. Consider a nonlinear continuous-time model of a car adapted from \cite{thierry2020l1}
\begin{align}
\begin{split}
\ddot{p}_x(t) & = u_1(t) \cos\vartheta(t),\\
\ddot{p}_y(t) & = u_1(t) \sin\vartheta(t),\\
\dot{\vartheta}(t) & = u_2(t)(\dot{p}_x(t)\cos\vartheta(t) + \dot{p}_y(t)\sin\vartheta(t)),
\end{split}\label{eq:car_ode}
\end{align}
where $p_x(t)$ and $p_y(t)$ are the $x$ and $y$ position of the car, $\vartheta(t)$ is the orientation with respect to the positive $x$-axis, $u_1(t)$ and $u_2(t)$ are the acceleration and steering control, respectively, where $u(t)=\operatorname*{col}(u_1(t),u_2(t))$ is the control input.

We consider the problem of steering the car from an initial resting position $p_x(0)=p_y(0)=\vartheta(0)=0$, to a terminal position $p_x(T)=0.5$, $p_y(T)=0.25$ with $\dot{p}_x(T)=\dot{p}_y(T)=0$ while minimizing the time $T$. The input must satisfy the constraints $|u_1(t)| \leq 0.75\theta$, $|u_2(t)|\leq 0.25$, where $\theta$ is a parameter.

Let $x=\operatorname*{col}(p_x,p_y,\dot{p}_x,\dot{p}_y,\vartheta)$ be the state of the system, and let the dynamics be given by $\dot{x}(t)=f(x(t),u(t))$, where $f$ is derived from \cref{eq:car_ode}. To express this free-final-time problem in the form in \probone, we express the ODE in normalized time $\tau\in[0,1]$, and define the \emph{time dilation} $\sigma=(d \tau / dt)^{-1}$, such that $\dot{x}(\tau):=\frac{d}{d \tau} x(\tau) = \sigma f(x(\tau),u(\tau))$ \cite{szmuk2018successive}. In this way, given the number of time-steps $N$, we can discretize the ODE $\dot{x}(\tau)=\sigma f(x(\tau),u(\tau))$ using RK4 with a sampling time $1/(N+1)$ to obtain $T=\sigma$. As a result, we can formulate the optimal control problem as follows:
\begin{align}
\begin{split}
\operatorname*{min.}_{x,u,\sigma} & \quad \sigma\\
\text{s.t.} & \quad x_{t+1} = f_{\text{d}}(x_t,u_t), ~~t\in\Z_{[0,N]},\\
& \quad |u_{1,t}| \leq 0.75 \theta,~~ |u_{2,t}| \leq 0.25,~~t\in\Z_{[0,N]},\\
& \quad p_{x,0}=p_{y,0}=0,~~\dot{p}_{x,0}=\dot{p}_{y,0}=0,\\
& \quad p_{x,N+1}=0.5,~~p_{y,N+1}=0.25,\\
& \quad \dot{p}_{x,N+1}=\dot{p}_{y,N+1}=0,
\end{split}\label{eq:P_car}
\end{align}
where $f_{\text{d}}$ is obtained by discretizing $\sigma f$, and $x_t=(p_{x,t},p_{y,t},\dot{p}_{x,t},\dot{p}_{y,t},\vartheta_t)$ and $u_t=(u_{1,t},u_{2,t})$ denote the discrete-time state and input at each time step, respectively.

\cref{fig:car_simulations} depicts the optimal solution of \cref{eq:P_car} with $\bar{\theta}=1$ and $N=150$ obtained by solving \cref{eq:P_car} in IPOPT. The Figure additionally shows two local minimizers for $\theta=\bar{\theta}+\delta \theta$ with $\delta \theta = 0.15$: the first, in blue, can be obtained by warm-starting the solver with the nominal solution obtained with $\theta=\bar{\theta}$; the second, in gray, is obtained by warmstarting each decision variable with a value of $\epsilon=0.075$ (choosing $\epsilon$ too small may cause numerical issues in this case, leading to a significanly slower convergence time).

\rzz{The first-order approximation $x(\bar{\theta})+\bar{\nabla}_\theta^\rho x(\bar{\theta})\delta \theta$, shown in red, almost perfectly matches the minimizer $x(\bar{\theta}+\delta \theta)$ obtained by warmstarting the solver with the nominal solution, even though $|\delta \theta|$ is not negligible compared to $\bar{\theta}$ and the solution map is not differentiable due to the failure of the SSOSC (as can be verified numerically).
A comparison between blue and gray lines reveals the existence of multiple local minimizers. Our method follows the one induced by the warm start and provides its associated local derivative. This behavior illustrates a key property of our algorithm, which delivers a consistent first-order description of a specific minimizer that remains valid as long as the solver does not move to another local minimizer (such as the grey trajectory).}

\begin{figure}
\centering
\begin{tikzpicture}

\definecolor{chocolate2168224}{RGB}{216,82,24}
\definecolor{darkcyan0113188}{RGB}{0,113,188}
\definecolor{purple12546141}{RGB}{125,46,141}

\definecolor{brown1611946}{RGB}{161,19,46}
\definecolor{darkgray176}{RGB}{176,176,176}
\definecolor{lightgray204}{RGB}{204,204,204}

\definecolor{newOrange}{RGB}{255, 127, 14}
\definecolor{newRed}{RGB}{214, 39, 40}
\colorlet{newBlue}{darkcyan0113188}
\definecolor{newlblue}{RGB}{77,190,238}

\pgfplotsset{line4/.style={
    ultra thick,
    newBlue,
    mark=square*, 
    mark size=0.75, 
    mark options={solid}
}}
\pgfplotsset{line5/.style={
    very thick,
    newOrange
}}
\pgfplotsset{line6/.style={
    very thick,
    newRed,
    dash pattern=on 5.25pt off 1.75pt on 1.75pt off 1.75pt,
    mark=*,
    mark size=0.625,
    mark options={solid}
}}
\pgfplotsset{line7/.style={
    thick,
    darkgray176,
    dash pattern=on 5.25pt off 1.75pt on 1.75pt off 1.75pt,
}}

\pgfplotsset{m/.style={line5}}
\pgfplotsset{m2/.style={line4}}
\pgfplotsset{m2_nw/.style={line7}}
\pgfplotsset{app/.style={line6}}

\begin{groupplot}[group style={group size=1 by 4, vertical sep = 10pt}, ylabel style={at={(-0.12,0.5)}}]
\nextgroupplot[
clip mode=individual,
height=3.25cm,
label style={font=\footnotesize},
legend cell align={left},
legend style={
  fill opacity=0.8,
  draw opacity=1,
  text opacity=1,
  at={(0.03,0.97)},
  anchor=north west,
  draw=lightgray204,
  row sep = -2.5pt
},
legend style={font=\footnotesize},
scaled x ticks=manual:{}{\pgfmathparse{#1}},
tick align=outside,
tick label style={font=\footnotesize},
width=8cm,
x grid style={darkgray176},
xmajorgrids,
xmajorticks=false,
xmin=0, xmax=3.9412370272314217,
xtick style={color=black},
xticklabels={},
y grid style={darkgray176},
ylabel={$x$-position [m]},
ymajorgrids,
ymin=-0.55, ymax=1.15,
ytick pos=left,
ytick style={color=black},
]
\addplot [m2]
table {Figures/Data/x_pos_m2.tex};
\addplot [m2_nw]
table {Figures/Data/x_pos_m2_nw.tex};
\addplot [m]
table {Figures/Data/x_pos.tex};
\addplot [app]
table {Figures/Data/x_pos_m2_approx.tex};

\nextgroupplot[
clip mode=individual,
height=3.25cm,
label style={font=\footnotesize},
legend cell align={left},
legend style={
  fill opacity=0.8,
  draw opacity=1,
  text opacity=1,
  at={(0.03,0.97)},
  anchor=north west,
  draw=lightgray204,
  row sep = -4pt
},
legend style={font=\scriptsize},
scaled x ticks=manual:{}{\pgfmathparse{#1}},
tick align=outside,
tick label style={font=\footnotesize},
width=8cm,
x grid style={darkgray176},
xmajorgrids,
xmajorticks=false,
xmin=0, xmax=3.9412370272314217,
xtick style={color=black},
xticklabels={},
y grid style={darkgray176},
ylabel={$y$-position [m]},
ymajorgrids,
ymin=-0.02, ymax=0.265,
ytick pos=left,
ytick style={color=black}
]
\addplot [m2]
table {Figures/Data/y_pos_m2.tex};
\addlegendentry{$x(\bar{\theta}\!+\!\delta \theta)$\! (warm.)}
\addplot [m2_nw]
table {Figures/Data/y_pos_m2_nw.tex};
\addlegendentry{$x(\bar{\theta}\!+\!\delta \theta)$\! (no warm.)}
\addplot [m]
table {Figures/Data/y_pos.tex};
\addlegendentry{$x(\bar{\theta})$}
\addplot [app]
table {Figures/Data/y_pos_m2_approx.tex};
\addlegendentry{$x(\bar{\theta})+\bar{\nabla}_\theta^\rho x(\bar{\theta})\delta \theta$}

\nextgroupplot[
clip mode=individual,
height=3.25cm,
label style={font=\footnotesize},
legend cell align={left},
legend style={
  fill opacity=0.8,
  draw opacity=1,
  text opacity=1,
  at={(0.03,0.97)},
  anchor=north west,
  draw=lightgray204,
  row sep = -3.5pt
},
legend style={font=\footnotesize},
scaled x ticks=manual:{}{\pgfmathparse{#1}},
tick align=outside,
tick label style={font=\footnotesize},
width=8cm,
x grid style={darkgray176},
xmajorgrids,
xmajorticks=false,
xmin=0, xmax=3.9412370272314217,
xtick style={color=black},
xticklabels={},
y grid style={darkgray176},
ylabel={Throttle $a(t)$},
ymajorgrids,
ymin=-0.81, ymax=0.81,
ytick pos=left,
ytick style={color=black}
]
\addplot [m2]
table {Figures/Data/a_m2.tex};
\addplot [m2_nw]
table {Figures/Data/a_m2_nw.tex};
\addplot [m]
table {Figures/Data/a.tex};
\addplot [app]
table {Figures/Data/a_m2_approx.tex};

\nextgroupplot[
clip mode=individual,
legend cell align={left},
legend style={
  fill opacity=0.8,
  draw opacity=1,
  text opacity=1,
  at={(0.03,0.97)},
  anchor=north west,
  draw=lightgray204,
  row sep = -2.5pt
},
height=3.25cm,
label style={font=\footnotesize},
legend style={font=\footnotesize},
tick align=outside,
tick label style={font=\footnotesize},
tick pos=left,
width=8cm,
x grid style={darkgray176},
xlabel={Time [s]},
xmajorgrids,
xmin=0, xmax=3.9412370272314217,
xtick style={color=black},
y grid style={darkgray176},
ylabel={Steering $s(t)$},
ymajorgrids,
ymin=-0.31, ymax=0.31,
ytick style={color=black}
]
\addplot [m2]
table {Figures/Data/s_m2.tex};
\addplot [m2_nw]
table {Figures/Data/s_m2_nw.tex};
\addplot [m]
table {Figures/Data/s.tex};
\addplot [app]
table {Figures/Data/s_m2_approx.tex};
\end{groupplot}

\end{tikzpicture}
\caption{Optimal position and input for $\theta=\bar{\theta}$ (orange), $\theta=\bar{\theta}+\delta \theta$ (blue), and first order approximation (red).}\label{fig:car_simulations}
\end{figure}

To assess the effectiveness of our approach, we proceed as follows: 1) we solve \cref{eq:P_car} using IPOPT and obtain a primal-dual solution $(\bar{x},\bar{\lambda},\bar{\nu})$ for $\theta=\bar{\theta}=1$; 2) we solve \cref{eq:approx_derivative} with $\rho=10^{-5}$ to obtain $\bar{\nabla}_{\bar{\theta}}^\rho x(\bar{\theta};\bar{\xi})$; 3) we approximate the true gradient $\nabla_\theta x(\bar{\theta})$ of the solution map using finite differences by solving \cref{eq:P_car} twice with $\theta^+=\bar{\theta}+\delta \theta$ and $\theta^-=\bar{\theta}-\delta \theta$, with $\delta \theta = 10^{-5}$, obtaining the solutions $x^+$ and $x^-$, and setting $\nabla_\theta x(\bar{\theta}) \approx \tilde{\nabla}_\theta x(\bar{\theta}):= (x^+-x^-)/2 \delta \theta$. We obtained
\begin{align*}
\frac{\|\bar{\nabla}_\theta x(\bar{\theta},\bar{\xi})-\tilde{\nabla}_\theta x(\bar{\theta})\|_\infty}{\|\tilde{\nabla}_\theta x(\bar{\theta})\|_\infty} = 0.058,
\end{align*}
which demonstrates that our approach provides an accurate first-order approximation of the minimizer of \cref{eq:P_car}. In practice, this approximation is expected to match the solution returned by the solver when warm-started with the nominal (unperturbed) solution.
We additionally attempted to compute the sensitivity of the solution using casadi's built-in sensitivity functionality. However, this approach failed (something that is to be expected due to the lack of differentiability of the solution).

By solving \cref{eq:derivative} in the least squares sense the relative error grows to 1135.429, significaly worsening the one obtained by our approach. This indicates that naively solving \cref{eq:derivative} when the SSOSC and LICQ do not hold may not provide a good approximation of the directional behavior of the solution.

We further evaluated our method on a higher-dimensional setting by extending $\theta$ to three components, incorporating the desired terminal positions $p_x(T)$ and $p_y(T)$. Using the same value of $\rho$ as before, our approach achieved a relative error of $0.056$ and a cosine similarity of $0.9995$, in sharp contrast to the least-squares solution, which resulted in a relative error of $2084.909$ and a cosine similarity of $0.002$.

In terms of efficiency, our method outperforms finite differences, as shown in \cref{tab:comp_times}. Even if the solution time of each nonlinear optimization problem were reduced using optimization methods that better accomodate warmstarting, the advantage of our approach would persist when the parameter dimension grows. This is because our method only requires solving a set of linear systems with a fixed left-hand side, which scales much more favorably than repeatedly solving nonlinear optimization problems. \demo

\begin{table}
\centering
\begin{tabular}{ c c c }
\toprule
& \textbf{Mean} & \textbf{Variance} \\
\midrule
Our method & $5.4323$ & $0.3579$ \\
Finite differences & $0.0564$ & $3.1866 \cdot 10^{-7}$ \\
\bottomrule
\end{tabular}
\caption{Computation times comparison (100 evaluations)}\label{tab:comp_times}
\end{table}

\end{example}

\example[Model Predictive Control]\label{section:MPC}
We now consider the nonlinear system of \cite{zuliani2024convergence} over a finite horizon $[0,200]$ under the feedback action of an MPC controller
\begin{align}
\begin{split}    
x_{t+1} & = f(x_t,u_t,\theta) \\
u_{t} & =\operatorname*{MPC}(x_t,\theta),
\end{split}\label{eq:mpc_nonlinear_dynamics}
\end{align}
where $x_t=(x_t^1,x_t^2)\in\R^2$, with $x_0=(3,0)$, and
\begin{align*}
f(x_t,u_t,\theta) :=\begin{bmatrix}
x_t^1 + 0.4x_t^2\\
0.56x_t^2+0.1x_t^1x_t^2+0.4u_t+\theta x_t^1\exp(-x_t^1)
\end{bmatrix}.
\end{align*}
The MPC controller is given by $\operatorname*{MPC}(x_t,\theta)=u_{0|t}^*$ where
\begin{align*}
(x_{\cdot|t}^*,u_{\cdot|t}^*) \in &~\operatorname*{argmin}_{x_{\cdot|t},u_{\cdot|t}} \quad \sum_{t=0}^{N} x_t^{\top}Qx_t\\
& ~~ \text{s.t.} \quad x_{k+1|t}=f(x_{k|t},u_{k|t},\theta),\\
& ~~ \hphantom{\text{s.t.}} \quad |u_t| \leq 2,~~ |x_{t}^2| \leq 2,\\
& ~~ \hphantom{\text{s.t.}} \quad x_{0|t}=x_t,
\end{align*}
with $N=20$, and $Q=\operatorname*{diag}(10^{-2},1)$. Since the closed-loop state-input trajectories $x_t$ (for $t\in[0,T]$) and $u_t$ (for $t\in[0,T-1]$) are completely determined by $\theta$, we denote with $x(\theta):=(x_0,\dots,x_T)$ and $u(\theta):=(u_0,\dots,u_{T-1})$ the trajectories obtained from \cref{eq:mpc_nonlinear_dynamics} with parameter $\theta$.

\begin{figure}
\centering
\input{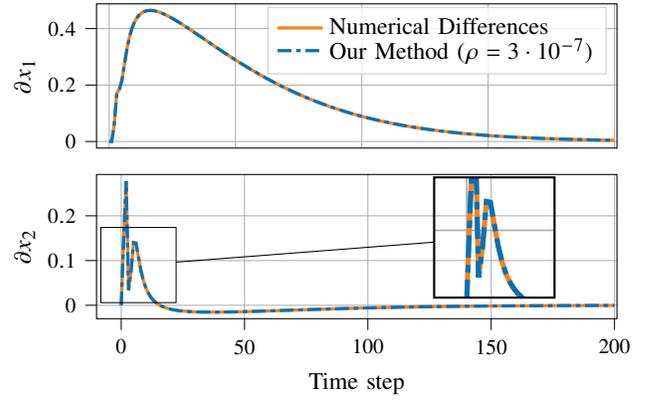}
\caption{Derivative of the closed-loop state trajectory computed with finite differences (orange) and our method (dashed blue).} \label{fig:fig3}
\vspace{-0.45cm}
\end{figure}

We are interested in computing the sensitivity $\partial x (\theta)/ \partial \theta$ of the closed-loop state trajectory $x(\theta)$, which can be obtained through the following propagation scheme
\begin{multline*}
\frac{\partial x_{t+1}(\theta)}{\partial \theta} = \frac{\partial f(x_t,u_t,\theta)}{\partial x_t} \frac{\partial x_t(\theta)}{\theta} + \frac{\partial f(x_t,u_t,\theta)}{\partial \theta} \\ ~~ + \, \frac{\partial f(x_t,u_t,\theta)}{\partial u_t} \left[ \frac{\partial \operatorname*{MPC}(x_t,\theta)}{\partial x_t} \frac{\partial x_t(\theta)}{\theta} + \frac{\partial \operatorname*{MPC}(x_t,\theta)}{\partial \theta} \right],
\end{multline*}
initialized with $\partial x_0(\theta)/\partial \theta = 0$, where the sensitivities $\partial \operatorname*{MPC}(x_t,\theta) / \partial x_t$ and $\partial \operatorname*{MPC}(x_t,\theta) / \partial \theta$ are computed with our method. This greatly extends to the procedure we studied in \cite{zuliani2023bp}, where the MPC policy was restricted to a strongly convex quadratic program.

\cref{fig:cos_sim_closed_loop} showcases the $2$-norm relative error and the cosine similarity of the sensitivity obtained by our method compared to the sensitivity of the state and input trajectories obtained via finite differences (with a perturbation of $\pm 10^{-8}$). Observe how nonzero values of $\rho$ increase the accuracy of the sensitivity until a maximum (at approximately $\rho=4 \cdot 10^{-7}$). A value of $\rho$ between $10^{-7}$ and $10^{-5}$ ensures a reduction of the relative error from approximately $7\%$ (value obtained with $\rho=0$) to less than $1\%$, with the error reducing to nearly zero for $\rho$ close to $4 \cdot 10^{-7}$. Importantly, regardless of the choice of $\rho$, any $\rho > 0$ guarantees better accuracy compared to the case $\rho=0$. \cref{fig:fig3} displayes the closed-loop state derivatives obtained with our method (choosing $\rho=3\cdot 10^{-7}$) and the ones obtained with finite differences, highlighting their similarity.

\begin{figure}
\centering
\begin{tikzpicture}

\definecolor{chocolate2168224}{RGB}{216,82,24}
\definecolor{darkcyan0113188}{RGB}{0,113,188}
\definecolor{purple12546141}{RGB}{125,46,141}

\definecolor{brown1611946}{RGB}{161,19,46}
\definecolor{darkgray176}{RGB}{176,176,176}
\definecolor{lightgray204}{RGB}{204,204,204}

\definecolor{newOrange}{RGB}{255, 127, 14}
\definecolor{newRed}{RGB}{214, 39, 40}
\colorlet{newBlue}{darkcyan0113188}
\definecolor{newlblue}{RGB}{77,190,238}

\pgfplotsset{ every non boxed x axis/.append style={x axis line style=-},
     every non boxed y axis/.append style={y axis line style=-}}

\pgfplotsset{line4/.style={
    very thick,
    newBlue,
}}
\pgfplotsset{line5/.style={
    very thick,
    newOrange
}}
\pgfplotsset{line6/.style={
    very thick,
    newRed,
    dash pattern=on 5.25pt off 1.75pt on 1.75pt off 1.75pt,
}}
\pgfplotsset{line7/.style={
    thick,
    darkgray176,
    dash pattern=on 5.25pt off 1.75pt on 1.75pt off 1.75pt,
}}

\definecolor{crimson2143940}{RGB}{214,39,40}
\definecolor{darkgray176}{RGB}{176,176,176}
\definecolor{steelblue31119180}{RGB}{31,119,180}

\begin{axis}[
width=7.5cm,height=3.75cm,
log basis x={10},
tick align=outside,
tick pos=left,
x grid style={darkgray176},
xlabel={$\rho$},
xmajorgrids,
xmin=9.99e-10, xmax=1.01e-05,
xmode=log,
xtick style={color=black},
y grid style={darkgray176},
ylabel=\textcolor{newRed}{Rel. Error},
ymajorgrids,
ymin=-0.00294067752851527, ymax=0.0753261602243407,
ytick style={color=black},
tick label style={font=\footnotesize},
label style={font=\small},
]
\addplot [line6]
table {%
0 0.07176857669012
1e-09 0.0500109620164905
1.20679264063933e-09 0.0470606427014889
1.45634847750124e-09 0.043932933541903
1.75751062485479e-09 0.0406709267026838
2.12095088792019e-09 0.0373263411029599
2.55954792269953e-09 0.0339564724015076
3.08884359647749e-09 0.030620365031883
3.72759372031494e-09 0.0273747323940216
4.49843266896945e-09 0.0242702119092815
5.42867543932386e-09 0.021348461311918
6.5512855685955e-09 0.0186404097718552
7.9060432109077e-09 0.0161657369747704
9.54095476349992e-09 0.0139334366932981
1.15139539932645e-08 0.0119431790818223
1.38949549437314e-08 0.0101871341295081
1.67683293681101e-08 0.00865194354898935
2.02358964772516e-08 0.00732060008716526
2.44205309454865e-08 0.00617408086302953
2.94705170255181e-08 0.00519266201556385
3.55648030622313e-08 0.00435690369984169
4.29193426012878e-08 0.00364833446450791
5.17947467923121e-08 0.00304988512883902
6.25055192527398e-08 0.00254612975631442
7.54312006335461e-08 0.00212339079462712
9.10298177991523e-08 0.00176976177774477
1.09854114198756e-07 0.00147509798230754
1.32571136559011e-07 0.00123102562191084
1.59985871960606e-07 0.00103102301346999
1.93069772888325e-07 0.000870621610607309
2.32995181051537e-07 0.000747717920517027
2.81176869797422e-07 0.000662777248979282
3.39322177189533e-07 0.00061830005177884
4.09491506238042e-07 0.000616906005705456
4.94171336132384e-07 0.000659002290023295
5.96362331659464e-07 0.000742607641730271
7.19685673001151e-07 0.000865579207267377
8.68511373751352e-07 0.0010277342963831
1.04811313415469e-06 0.00123161439938348
1.2648552168553e-06 0.00148243759045726
1.52641796717523e-06 0.00178792113840325
1.84206996932672e-06 0.00215825089097287
2.22299648252619e-06 0.00260624440477764
2.68269579527973e-06 0.00314769298947395
3.23745754281764e-06 0.00380186839857234
3.90693993705461e-06 0.00459219442445713
4.71486636345739e-06 0.00554709874078678
5.68986602901829e-06 0.00670107429229619
6.866488450043e-06 0.00809599356087508
8.28642772854684e-06 0.00978273489281897
1e-05 0.0118231996888691
};
\end{axis}

\begin{axis}[
width=7.5cm,height=3.75cm,
axis y line=right,
log basis x={10},
xtick style={draw=none},
xticklabels = empty,
tick align=outside,
x grid style={darkgray176},
xmin=9.99e-10, xmax=1.01e-05,
xmode=log,
xtick pos=left,
xtick style={color=black},
y grid style={darkgray176},
ylabel=\textcolor{newBlue}{Cos. Sim.},
ymin=0.997306187311984, ymax=1.00012814266229,
ytick pos=right,
ytick style={color=black},
yticklabel style={anchor=west},
tick label style={font=\footnotesize},
label style={font=\small},
]
\addplot [line4]
table {%
0 0.997434458009725
1e-09 0.998759535232948
1.20679264063933e-09 0.998902273394807
1.45634847750124e-09 0.999043994028555
1.75751062485479e-09 0.99918128927576
2.12095088792019e-09 0.999310935077237
2.55954792269953e-09 0.999430187963417
3.08884359647749e-09 0.999537021572517
3.72759372031494e-09 0.999630260238343
4.49843266896945e-09 0.999709591656232
5.42867543932386e-09 0.999775469265243
6.5512855685955e-09 0.999828937582483
7.9060432109077e-09 0.999871423787999
9.54095476349992e-09 0.999904535751422
1.15139539932645e-08 0.999929894654038
1.38949549437314e-08 0.999949015460561
1.67683293681101e-08 0.999963235711463
2.02358964772516e-08 0.999973684918147
2.44205309454865e-08 0.999981283390101
2.94705170255181e-08 0.999986759385719
3.55648030622313e-08 0.999990675479927
4.29193426012878e-08 0.999993457678983
5.17947467923121e-08 0.999995423275044
6.25055192527398e-08 0.999996805354225
7.54312006335461e-08 0.999997773186212
9.10298177991523e-08 0.999998448517541
1.09854114198756e-07 0.99999891820199
1.32571136559011e-07 0.999999243762868
1.59985871960606e-07 0.999999468496424
1.93069772888325e-07 0.999999622665729
2.32995181051537e-07 0.99999972724456
2.81176869797422e-07 0.999999796576747
3.39322177189533e-07 0.999999840230288
4.09491506238042e-07 0.999999864251627
4.94171336132384e-07 0.999999871964544
5.96362331659464e-07 0.99999986440744
7.19685673001151e-07 0.999999840459515
8.68511373751352e-07 0.999999796666879
1.04811313415469e-06 0.999999726739348
1.2648552168553e-06 0.999999620642957
1.52641796717523e-06 0.999999463155936
1.84206996932672e-06 0.999999231678656
2.22299648252619e-06 0.999998892980138
2.68269579527973e-06 0.999998398408783
3.23745754281764e-06 0.99999767687036
3.90693993705461e-06 0.999996624547728
4.71486636345739e-06 0.999995089854626
5.68986602901829e-06 0.999992851405084
6.866488450043e-06 0.999989585729496
8.28642772854684e-06 0.999984819909364
1e-05 0.999977861979919
};
\end{axis}

\end{tikzpicture}
\caption{Relative error and cosine similarity between the sensitivity of $x(\theta)$ obtained with finite difference and with our method.}\label{fig:cos_sim_closed_loop}
\end{figure}
\begin{figure}
\centering
\begin{tikzpicture}

\definecolor{chocolate2168224}{RGB}{216,82,24}
\definecolor{darkcyan0113188}{RGB}{0,113,188}
\definecolor{purple12546141}{RGB}{125,46,141}

\definecolor{brown1611946}{RGB}{161,19,46}
\definecolor{darkgray176}{RGB}{176,176,176}
\definecolor{lightgray204}{RGB}{204,204,204}

\definecolor{newOrange}{RGB}{255, 127, 14}
\definecolor{newRed}{RGB}{214, 39, 40}
\colorlet{newBlue}{darkcyan0113188}
\definecolor{newlblue}{RGB}{77,190,238}

\pgfplotsset{ every non boxed x axis/.append style={x axis line style=-},
     every non boxed y axis/.append style={y axis line style=-}}

\pgfplotsset{line4/.style={
    very thick,
    newBlue,
}}
\pgfplotsset{line5/.style={
    very thick,
    newOrange
}}
\pgfplotsset{line6/.style={
    very thick,
    newRed,
    dash pattern=on 5.25pt off 1.75pt on 1.75pt off 1.75pt,
}}
\pgfplotsset{line7/.style={
    thick,
    darkgray176,
    dash pattern=on 5.25pt off 1.75pt on 1.75pt off 1.75pt,
}}

\definecolor{crimson2143940}{RGB}{214,39,40}
\definecolor{darkgray176}{RGB}{176,176,176}
\definecolor{steelblue31119180}{RGB}{31,119,180}

\begin{axis}[
width=7.5cm,height=3.75cm,
log basis x={10},
tick align=outside,
tick pos=left,
x grid style={darkgray176},
xlabel={$\rho$},
xmajorgrids,
xmin=9.99e-10, xmax=1.01e-05,
xmode=log,
xtick style={color=black},
y grid style={darkgray176},
ylabel=\textcolor{newRed}{Rel. Error},
ytick = {0,0.25,0.5,0.75,1},
ymajorgrids,
ymin=-0.05, ymax=1.05,
ytick style={color=black},
tick label style={font=\footnotesize},
label style={font=\small},
]
\addplot [line6]
table {%
0 1.19288853751611
1e-09 0.870080673377791
1.20679264063933e-09 0.823970883138122
1.45634847750124e-09 0.774442283278989
1.75751062485479e-09 0.722063761541012
2.12095088792019e-09 0.667576048293314
2.55954792269953e-09 0.611856635025388
3.08884359647749e-09 0.555866640824968
3.72759372031494e-09 0.500585929587563
4.49843266896945e-09 0.446945523038955
5.42867543932386e-09 0.395767006804589
6.5512855685955e-09 0.347716956045505
7.9060432109077e-09 0.303281003238907
9.54095476349992e-09 0.262758161591333
1.15139539932645e-08 0.226272564882471
1.38949549437314e-08 0.193797645780334
1.67683293681101e-08 0.165187132519936
2.02358964772516e-08 0.140207810295248
2.44205309454865e-08 0.118570256431048
2.94705170255181e-08 0.0999552160206027
3.55648030622313e-08 0.084034584012676
4.29193426012878e-08 0.0704869270612317
5.17947467923121e-08 0.0590080836889369
6.25055192527398e-08 0.0493176789036458
7.54312006335461e-08 0.0411624661284617
9.10298177991523e-08 0.0343173504634984
1.09854114198756e-07 0.0285848205528266
1.32571136559011e-07 0.0237933679613741
1.59985871960606e-07 0.0197953304357013
1.93069772888325e-07 0.0164644723868726
2.32995181051537e-07 0.0136935170638305
2.81176869797422e-07 0.011391769802258
3.39322177189533e-07 0.00948291765191141
4.09491506238042e-07 0.00790305409328539
4.94171336132384e-07 0.00659895506488568
5.96362331659464e-07 0.00552662103902585
7.19685673001151e-07 0.00465009591418987
8.68511373751352e-07 0.00394057094166312
1.04811313415469e-06 0.00337576738172646
1.2648552168553e-06 0.00293953829266625
1.52641796717523e-06 0.0026215030799913
1.84206996932672e-06 0.00241634557690819
2.22299648252619e-06 0.00232238349984372
2.68269579527973e-06 0.00233955804892137
3.23745754281764e-06 0.0024679559325731
3.90693993705461e-06 0.00270804929759773
4.71486636345739e-06 0.00306242493908414
5.68986602901829e-06 0.00353767112035626
6.866488450043e-06 0.00414555118164758
8.28642772854684e-06 0.00490352144468556
1e-05 0.00583500312336113
};
\end{axis}

\begin{axis}[
width=7.5cm,height=3.75cm,
axis y line=right,
log basis x={10},
xtick style={draw=none},
xticklabels = empty,
tick align=outside,
x grid style={darkgray176},
xmin=9.99e-10, xmax=1.01e-05,
xmode=log,
xtick pos=left,
xtick style={color=black},
y grid style={darkgray176},
ylabel=\textcolor{newBlue}{Cos. Sim.},
ytick = {0,0.25,0.5,0.75,1},
ymin=-0.05, ymax=1.05,
ytick pos=right,
ytick style={color=black},
yticklabel style={anchor=west},
tick label style={font=\footnotesize},
label style={font=\small},
]
\addplot [line4]
table {%
0 0.259243555603806
1e-09 0.555943686352706
1.20679264063933e-09 0.60083369441916
1.45634847750124e-09 0.648335323611228
1.75751062485479e-09 0.69704911916839
2.12095088792019e-09 0.745247464819903
2.55954792269953e-09 0.791094877480585
3.08884359647749e-09 0.832938352596429
3.72759372031494e-09 0.86957517232034
4.49843266896945e-09 0.900404282321596
5.42867543932386e-09 0.925420490547144
6.5512855685955e-09 0.94508170635297
7.9060432109077e-09 0.960120814421639
9.54095476349992e-09 0.971369781012268
1.15139539932645e-08 0.979633043884309
1.38949549437314e-08 0.985616443459018
1.67683293681101e-08 0.989900283309357
2.02358964772516e-08 0.992940316758423
2.44205309454865e-08 0.995082856948708
2.94705170255181e-08 0.996584768823995
3.55648030622313e-08 0.997633188993647
4.29193426012878e-08 0.99836263956206
5.17947467923121e-08 0.998868848901078
6.25055192527398e-08 0.999219417868753
7.54312006335461e-08 0.999461804323292
9.10298177991523e-08 0.99962917408434
1.09854114198756e-07 0.999744623219963
1.32571136559011e-07 0.999824190907217
1.59985871960606e-07 0.999878991295439
1.93069772888325e-07 0.99991671241307
2.32995181051537e-07 0.999942664912264
2.81176869797422e-07 0.999960513073689
3.39322177189533e-07 0.999972782824963
4.09491506238042e-07 0.999981213956529
4.94171336132384e-07 0.999987003843886
5.96362331659464e-07 0.999990975813184
7.19685673001151e-07 0.999993695262489
8.68511373751352e-07 0.999995549601519
1.04811313415469e-06 0.99999680313546
1.2648552168553e-06 0.999997634569031
1.52641796717523e-06 0.999998162397121
1.84206996932672e-06 0.999998461759246
2.22299648252619e-06 0.999998575139692
2.68269579527973e-06 0.999998518431039
3.23745754281764e-06 0.999998283227461
3.90693993705461e-06 0.999997835686741
4.71486636345739e-06 0.999997111824382
5.68986602901829e-06 0.999996008616638
6.866488450043e-06 0.999994369729502
8.28642772854684e-06 0.999991963989955
1e-05 0.999988453794714
};
\end{axis}

\end{tikzpicture}
\caption{Relative error and cosine similarity between the sensitivity of $u_0(\theta)$ obtained with finite difference and with our method.}\label{fig:cos_sim_mpc}
\end{figure}

Choosing $\rho$ is crucial to ensure the highest accuracy. To pick $\rho$, we can restrict attention to a single MPC problem, evaluate the sensitivity of its solution map using finite differences, and then perform a grid search comparing the derivative obtained with our method for different values of $\rho$. The result, which is reported in \cref{fig:cos_sim_mpc}, suggests that values of $\rho$ between $10^{-7}$ and $10^{-5}$ lead to highest accuracy. Note that this tuning technique only requires differentiating a single MPC instance through finite differences (and not all MPC problems within the finite horizon of $200$ steps), making it more computationally efficient than applying standard finite difference approximations.
%
%
\section{Conclusion}
In this paper we propose a principled way to obtain sensitivities of solution maps of nonlinear optimization problems without requiring strong second order sufficient conditions of optimality, linear independence constraint qualification, and strict complementarity slackness. This technical result greatly relaxes the set of assumptions commonly found in literature and can be used in future work to provide convergence guarantees to policy optimization schemes using nonlinear MPC controllers under less restrictive assumptions than the ones currently found in the literature, see, for example, \cite{davis2020stochastic}. We showcase the effectiveness of our method in both a trajectory optimization and an MPC example. 
Future work will focus on developing specialized software to implement the method more efficiently, and utilizing the proposed algorithm within a policy optimization framework.
%
\bibliographystyle{IEEEtran}
\bibliography{Sources/ref.bib}
\end{document}